\theoremstyle{definition}
\newtheorem{thm}{Theorem}
\newtheorem{lemma}{Lemma}
\newtheorem{definition}{Definition}
\newcommand*{\Cdot}{\raisebox{-0.25ex}{\scalebox{1.4}{${\cdot}$}}}
\newcommand{\gx}[2]{g_{#1}^{#2}}
\newcommand{\gy}[2]{\tilde{g}_{#1}^{#2}}%
\newcommand{\gxy}[2]{\mathring{g}_{#1}^{#2}}%
\newcommand{\dd}{\mbox{d}}
\newcommand{\ee}{\mbox{e}}
\newcommand{\E}{{\rm E}}
\begin{document}
\begin{center}
{\Large Simultaneous prediction for independent Poisson processes \\ with different durations} \\
\vspace{20pt}
{\large Fumiyasu Komaki $^{1,2}$} \\
{\small $^{1}$ Department of Mathematical Informatics} \\
{\small Graduate School of Information Science and Technology, the University of Tokyo} \\
{\small 7-3-1 Hongo, Bunkyo-ku, Tokyo 113-8656, JAPAN} \\
{\small komaki@mist.i.u-tokyo.ac.jp} \\
{\small $^{2}$ RIKEN Brain Science Institute} \\
{\small 2-1 Hirosawa, Wako City, Saitama 351-0198, JAPAN}
\end{center}

\vspace{10pt}

\begin{center}
Abstract
\end{center}

Simultaneous predictive densities for independent Poisson observables are investigated.
The observed data and the target variables to be predicted are independently distributed
according to different Poisson distributions parametrized by the same parameter.
The performance of predictive densities is evaluated by the Kullback--Leibler divergence.
A class of prior distributions depending on the objective of prediction
is introduced.
A Bayesian predictive density based on a prior in this class dominates
the Bayesian predictive density based on the Jeffreys prior.

\vspace{0.5cm}
\noindent
Keywords: harmonic time, Jeffreys prior, Kullback--Leibler divergence, predictive density, predictive metric, shrinkage prior

\section{Introduction}

Suppose that $x_i$ $(i=1,\ldots,d)$ are independently distributed
according to the Poisson distribution
with mean $r_i \lambda_i$
and that $y_i$ $(i=1,\ldots,d)$ are independently distributed
according to the Poisson distribution
with mean $s_i \lambda_i$.
Then,
\begin{align}
p(x \mid \lambda)
= \prod^d_{i=1} \frac{(r_i \lambda_i)^{x_i}}{x_i!} \ee^{-r_i \lambda_i},
\label{xdist}
\end{align}
and
\begin{align}
p(y \mid \lambda)
= \prod^d_{i=1} \frac{(s_i \lambda_i)^{y_i}}{y_i!} \ee^{-s_i \lambda_i},
\label{ydist}
\end{align}
where $x = (x_1,\ldots,x_d)$ and $y = (y_1,\ldots,y_d)$.
Here, $\lambda := (\lambda_1, \ldots, \lambda_d)$ is the unknown parameter, and
$r = (r_1,\ldots,r_d)$ and $s = (s_1,\ldots,s_d)$ are known positive constants.
The objective is to construct a predictive density $\hat{p}(y ; x)$ for $y$ by using $x$.

The performance of $\hat{p}(y;x)$ is evaluated by the Kullback--Leibler divergence
\[
 D(p(y \mid \lambda), \hat{p}(y;x)) := \sum_y p(y \mid \lambda) \log \frac{p(y \mid \lambda)}{\hat{p}(y;x)}
\]
from the true density $p(y \mid \lambda)$ to the predictive density $\hat{p}(y;x)$.
The risk function is given by
\[
 \E \Bigl[ D(p(y \mid \lambda), \hat{p}(y;x)) \, \Big| \, \lambda \Bigr]
= \sum_x \sum_y p(x \mid \lambda) p(y \mid \lambda) \log \frac{p(y \mid \lambda)}{\hat{p}(y;x)}.
\]
It is widely recognized that Bayesian predictive densities
\[
 p_\pi(y \mid x) :=
\frac{\int p(y \mid \lambda) p(x \mid \lambda) \pi(\lambda) \dd \lambda}
{\int p(x \mid \lambda) \pi(\lambda) \dd \lambda},
\]
where $\dd \lambda = \dd \lambda_1 \cdots \dd \lambda_d$,
constructed by using a prior $\pi$ perform better than
plug-in densities $p(y \mid \hat{\lambda})$ constructed by replacing the unknown parameter $\lambda$
by an estimate $\hat{\lambda}(x)$.
The choice of $\pi$ becomes important to construct a Bayesian predictive density.

The Jeffreys prior
\begin{align}
\label{jeffreysx}
\pi_\mathrm{J} (\lambda) \dd \lambda_1 \dotsb \dd \lambda_d
\propto \lambda_1^{-\frac{1}{2}} \dotsb \lambda_d^{-\frac{1}{2}} \dd \lambda_1 \dotsb \dd \lambda_d
\end{align}
for $p(x \mid \lambda)$ is proportional to the Jeffreys prior for $p(y \mid \lambda)$ and
the volume element prior $\pi_\mathrm{P} (\lambda)$ with respect to
the predictive metric discussed in section 4.
A natural class of priors including the Jeffreys prior is
\begin{align*}
\pi_\beta (\lambda) \dd \lambda_1 \dotsb \dd \lambda_d
:= \lambda_1^{\beta_1-1} \dotsb \lambda_d^{\beta_d-1} \dd \lambda_1 \dotsb \dd \lambda_d,
\end{align*}
where $\beta_i > 0$ $(i=1,\ldots,d)$.

We introduce a class of priors defined by
\[
\pi_{\alpha,\beta,\gamma} (\lambda) \dd \lambda_1 \dotsb \dd \lambda_d :=
\frac{\lambda^{\beta_1-1}_1 \dotsb \lambda^{\beta_d -1}_d}
{(\lambda_1/\gamma_1 + \dotsb + \lambda_d/\gamma_d)^\alpha}
\dd \lambda_1 \dotsb \dd \lambda_d,
\]
where $0 \leq \alpha \leq \beta_{\Cdot} := \sum_i \beta_i$, $\beta_i > 0$,
and $\gamma_i > 0$ $(i=1,\ldots,d)$.
In the following, a dot as a subscript indicates summation over the corresponding index.
Note that $\pi_{\alpha,\beta,\gamma} \propto \pi_{\alpha,\beta,c\gamma}$,
where $c>0$ and $c\gamma = (c\gamma_1,\ldots,c\gamma_d)$.
The prior $\pi_{\alpha,\beta,\gamma}$ does not depend on $\gamma := (\gamma_1,\ldots,\gamma_d)$ if $\alpha = 0$.
If $\alpha > 0$, $\pi_{\alpha, \beta, \gamma}$ puts more weight on
parameter values close to $0$ than $\pi_\beta$ does.
In this sense, $\pi_{\alpha, \beta, \gamma}$ with $\alpha > 0$ is a shrinkage prior.

There have been several studies for the simple setting
$r_1 = r_2 = \cdots = r_d$ and $s_1 = s_2 = \cdots = s_d$.
Decision theoretic properties of linear estimators
under the Kullback--Leibler loss is studied by \cite{GY1988AS}.
The theory for Bayesian predictive densities for the Poisson model
is a generalization of that for Bayesian estimators under the Kullback--Leibler loss.
A class of priors $\pi_{\alpha,\beta} := \pi_{\alpha,\beta,\gamma}$
with $\gamma_1 = \cdots = \gamma_d = 1$ is introduced
in \citet{Komaki:TheAnnalsOfStatistics:2004}.
It is shown that the risk of the Bayesian predictive density based on $\pi_{\tilde{\alpha},\beta}$
with $\tilde{\alpha} := \beta_{\Cdot} -1$
is smaller than the risk of that based on $\pi_\beta$ if $\beta_{\Cdot} > 1$.
For example, if $d \geq 3$,  there exists a Bayesian predictive density that dominates the Bayesian predictive density
$p_\mathrm{J}(y \mid x)$
based on the Jeffreys prior because $\beta_{\Cdot} = d/2 > 1$.
Here, $p_\pi(y \mid x)$ is said to dominate $p_\mathrm{J}(y \mid x)$
if the risk of $p_\pi(y \mid x)$ is not greater than that of $p_\mathrm{J}(y \mid x)$
for all $\lambda$ and 
the strict inequality holds
for at least one point $\lambda$ in the parameter space.

Bayesian predictive densities based on shrinkage priors are discussed by
\cite{komaki2001shrinkage} and \cite{George:TheAnnalsOfStatistics:2006} for normal models.
See also \cite{George:StatisticalScience:2012} for recent developments of the theory of predictive densities.
In practical applications, it often occurs that observed data $x$ and the target variable $y$ to be predicted
have different distributions parametrized by the same parameter.
Regression models with the same parameter and different explanatory variable values are a typical example.
\cite{Kobayashi:JournalOfMultivariateAnalysis:2008} and \cite{george2008predictive}
showed that shrinkage priors are useful for constructing
Bayesian predictive densities for
normal linear regression models.
\cite{KBA2014} has studied asymptotic theory for general models other than normal models
when $x(i)$ $(i=1,\ldots,N)$ and $y$ have different distributions $p(x \mid \theta)$ and $p(y \mid \theta)$,
respectively, with the same parameter $\theta$.
However, there has been few studies on nonasymptotic theories
of Bayesian predictive densities for non-normal models
when the distributions of $x$ and $y$ are different.

In the present paper, we develop finite sample theory for prediction
when the data $x$ and the target variable $y$ have
different Poisson distributions \eqref{xdist} and \eqref{ydist}, respectively, with the same parameter $\lambda$.
The proposed prior depends not only on $r$ corresponding to the data distribution but also on $s$
corresponding to the objective of prediction.
Thus, we need to abandon the context invariance of the prior, see e.g.~\cite{dawid1983invariant}.
The Bayesian predictive densities studied in the present paper are not represented
by using widely known functions such as gamma or beta functions,
contrary to the simple setting $r_1 = \cdots = r_d$ and $s_1 = \cdots = s_d$ \citep{Komaki:TheAnnalsOfStatistics:2004}.
However, the predictive densities are represented by introducing
a generalization of the Beta function, and the results are proved analytically.

In section 2, 
we formulate the problem as prediction for time-inhomogeneous Poisson processes
and the risk function is represented as an integral with respect to the time.
In section 3, we show that a Bayesian predictive density based on a prior 
in the introduced class $\pi_{\alpha,\beta,\gamma}$ dominates that based on $\pi_\beta$ if $\beta_{\Cdot} > 1$.
The harmonic time $\tau$ for the time-inhomogeneous Poisson processes is introduced to prove the results.
In section 4, we discuss several properties of the proposed prior and the harmonic time $\tau$.

\section{Evaluation of risk}
We formulate the problem as prediction for time-inhomogeneous Poisson processes
and obtain a useful expression of the risk.

Let
$t_i (\tau)$ $(i=1, \dotsb, d)$ be smooth monotonically increasing functions of $\tau \in [0,1]$ satisfying
$t_i (0) = r_i$ and $t_i (1) = r_i + s_i$.
Let $z_i(\tau)$ $(i=1, \dotsb, d)$ be independent time-inhomogeneous Poisson processes with mean
$t_i(\tau) \lambda_i$ and time $\tau$.
Then, the density of $z(\tau)$ is
\[
p(z(\tau) \mid \lambda) 
= \prod^d_{i=1} \frac{\{t_i(\tau) \lambda_i\}^{z_i}}{z_i!} \ee^{-t_i(\tau) \lambda_i},
\]
where $z(\tau) := (z_1(\tau),\ldots,z_d(\tau))$,
and the distributions of $z_i(0)$ and $z_i(1) - z_i(0)$ are identical with
those of $x_i$ and $y_i$, respectively.
Since $z(0)$ and $z(1)-z(0)$ are independent,
prediction of $y$ based on $x$ is equivalent to
prediction of $z(1)-z(0)$ based on $z(0)$.
We identify $x$ and $y$ with $z(0)$ and $z(1)-z(0)$, respectively.

Let $z_\Delta(\tau) := z(\tau+\Delta) - z(\tau)$.
Then, $z_{\Delta=1}(0)$ corresponds to $y$.
The density of $z_\Delta(\tau)$ is
\begin{align*}
p(z_\Delta(\tau) \mid \lambda) =&
\prod^d_{i=1} \frac{[\{t_i(\tau+\Delta) - t_i(\tau)\} \lambda_i]^{(z_\Delta)_i}}{(z_\Delta)_i!}
\ee^{- \{t_i(\tau+\Delta) - t_i(\tau)\} \lambda_i}.
\end{align*}
We designate the prediction of $z_\Delta(\tau)$ in the limit $\Delta \rightarrow 0$ as infinitesimal prediction.

The following lemma represents the risk of the original prediction as an integral of the risk of infinitesimal prediction.

\vspace{0.5cm}

\begin{lemma}\label{smallprediction}
\begin{description}
 \item{~1)~}
Let $\pi(\lambda)$ be a prior density.
Then,
\begin{align}
\frac{\partial}{\partial \Delta} &
\E\Bigl[ D\{p(z_\Delta(\tau) \mid \lambda), p_\pi(z_\Delta(\tau) \mid z(\tau))\} \; \Big| \; \lambda \Bigr]
\bigg|_{\Delta = 0} \notag \\
\label{hyouka-2}
=& \frac{\partial}{\partial \tau} D \{ p(z(\tau) \mid \lambda), p_\pi(z(\tau)) \} \\
\label{hyouka-3}
=& \E \left[
\sum^d_{i=1} \dot{t}_i(\tau)
\biggl\{
\hat{\lambda}^\pi_i(z(\tau),\tau) -  \lambda_i
 - \lambda_i \log \frac{\hat{\lambda}^\pi_i (z(\tau),\tau)}{\lambda_i} \biggr\}
 \; \Bigg| \; \lambda \right],
\end{align}
where
\[
p_\pi (z(\tau)) := \int p(z(\tau) \mid \lambda) \pi (\lambda) \dd \lambda
= \int \prod^d_{i=1} \frac{\{t_i(\tau) \lambda_i \}^{z_i}}{z_i!}
\ee^{-t_i(\tau) \lambda_i} \pi (\lambda) \dd \lambda,
\]
\[
\hat{\lambda}_i^{\pi}(z(\tau),\tau) :=
\frac{\int \lambda_i p(z(\tau) \mid \lambda) \pi (\lambda) \dd \lambda}{\int p(z(\tau) \mid \lambda)
\pi (\lambda) \dd \lambda},
\]
and
\[
\dot{t}_i(\tau) := \frac{\dd}{\dd \tau} t(\tau).
\]
 \item{~2)~}
Let $\pi(\lambda)$ and $\pi'(\lambda)$ be prior densities, and let $p_\pi(y \mid x)$ and $p_{\pi'}(y \mid x)$
be the corresponding Bayesian predictive densities.
Then,
\begin{align}
\E \bigl[ & D(p(y \mid \lambda), p_{\pi'} ( y \mid x)) \, \big| \, \lambda \bigr]
- \E \bigl[ D (p(y \mid \lambda), p_{\pi} (y \mid x)) \, \big| \, \lambda \bigr] \notag \\
= & \int^1_0 \frac{\partial}{\partial \Delta} \E
\big[ D ( p ( z_\Delta(\tau) \mid \lambda ) , p_{\pi'} ( z_\Delta  (\tau) \mid z ( \tau) ) \mid \lambda  \big]
\bigg\vert_{\Delta=0}
 \dd \tau \notag \\
& \label{lemma1-2-1}
- \int^1_0 \frac{\partial}{\partial \Delta} \E 
\big[ D ( p ( z_\Delta(\tau) \mid \lambda), p_\pi ( z_\Delta (\tau) \mid z ( \tau) ) \mid \lambda \big]
\bigg\vert_{\Delta=0} \dd \tau \\
= & \int^1_0
\E \left[ \sum_i \dot{t}_i(\tau) \biggl\{
\hat{\lambda}^{\pi'}_i (z(\tau),\tau)
- \lambda_i - \lambda_i \log \frac{\hat{\lambda}^{\pi'}_i (z(\tau),\tau)}{\lambda_i}
\biggr\}
\; \Bigg| \; \lambda \right]
 \dd \tau \notag \\
& \label{lemma1-2-2}
- \int^1_0
\E \left[ \sum_i \dot{t}_i(\tau) \biggl\{
\hat{\lambda}^\pi_i(z(\tau),\tau) - \lambda_i - \lambda_i \log \frac{\hat{\lambda}^\pi_i (z(\tau),\tau)}{\lambda_i}
\biggr\}
\; \Bigg| \; \lambda \right]
\dd \tau.
\end{align}
\end{description}
\qed
\end{lemma}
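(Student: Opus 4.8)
The strategy is to exploit the independent-increments structure of the Poisson processes $z(\tau)$ together with a Poisson-thinning (sufficiency) identity that collapses the whole problem to a single differentiation in $\tau$. For part~1, observe first that $z(\tau)$ and $z_\Delta(\tau)=z(\tau+\Delta)-z(\tau)$ are independent with $z(\tau)+z_\Delta(\tau)=z(\tau+\Delta)$, so the conditional law of $z(\tau)$ given $z(\tau+\Delta)$ is a product of binomials $\mathrm{Bin}(z_i(\tau+\Delta),\,t_i(\tau)/t_i(\tau+\Delta))$ and does not involve $\lambda$; write $q(z(\tau)\mid z(\tau+\Delta))$ for this parameter-free conditional. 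Then $p(z(\tau)\mid\lambda)\,p(z_\Delta(\tau)\mid\lambda)=q(z(\tau)\mid z(\tau+\Delta))\,p(z(\tau+\Delta)\mid\lambda)$ and, integrating against $\pi$, $p_\pi(z(\tau))\,p_\pi(z_\Delta(\tau)\mid z(\tau))=q(z(\tau)\mid z(\tau+\Delta))\,p_\pi(z(\tau+\Delta))$, so the two likelihood ratios coincide:
\[
\frac{p(z(\tau)\mid\lambda)\,p(z_\Delta(\tau)\mid\lambda)}{p_\pi(z(\tau))\,p_\pi(z_\Delta(\tau)\mid z(\tau))}=\frac{p(z(\tau+\Delta)\mid\lambda)}{p_\pi(z(\tau+\Delta))}.
\]
Taking $\E[\log(\cdot)\mid\lambda]$ of both sides, splitting the logarithm on the left, and using the chain rule for the Kullback--Leibler divergence of the pair $(z(\tau),z_\Delta(\tau))$ then gives the exact identity $\E[D\{p(z_\Delta(\tau)\mid\lambda),p_\pi(z_\Delta(\tau)\mid z(\tau))\}\mid\lambda]=D\{p(z(\tau+\Delta)\mid\lambda),p_\pi(z(\tau+\Delta))\}-D\{p(z(\tau)\mid\lambda),p_\pi(z(\tau))\}$, and differentiating at $\Delta=0$ yields \eqref{hyouka-2}.

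To obtain \eqref{hyouka-3} I would expand $D\{p(z_\Delta(\tau)\mid\lambda),p_\pi(z_\Delta(\tau)\mid z(\tau))\}$ to first order in $\Delta$. As $\Delta\to0$, the only configurations of $z_\Delta(\tau)$ with probability of order $\Delta$ are $z_\Delta(\tau)=0$, having probability $1-\Delta\sum_i\dot{t}_i(\tau)\lambda_i+O(\Delta^2)$ under $p(\cdot\mid\lambda)$, and the configuration with a single count in coordinate $i$, having probability $\Delta\dot{t}_i(\tau)\lambda_i+O(\Delta^2)$; the same probabilities under $p_\pi(\cdot\mid z(\tau))$ are obtained by replacing $\lambda_i$ with the posterior mean $\hat{\lambda}^\pi_i(z(\tau),\tau)$, while every configuration of total count at least $2$ has probability $O(\Delta^2)$ under both laws and, the powers of $\Delta$ cancelling inside the logarithm, contributes only $O(\Delta^2)$. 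Adding the $z_\Delta(\tau)=0$ contribution $\Delta\sum_i\dot{t}_i(\tau)\{\hat{\lambda}^\pi_i(z(\tau),\tau)-\lambda_i\}+O(\Delta^2)$ to the sum of the single-count contributions $\Delta\sum_i\dot{t}_i(\tau)\,\lambda_i\log\{\lambda_i/\hat{\lambda}^\pi_i(z(\tau),\tau)\}+O(\Delta^2)$ --- equivalently, evaluating the Kullback--Leibler divergence between the product-Poisson laws with means $\Delta\dot{t}_i(\tau)\lambda_i$ and $\Delta\dot{t}_i(\tau)\hat{\lambda}^\pi_i(z(\tau),\tau)$, up to $O(\Delta^2)$ --- then forming $\E[\cdot\mid\lambda]$ and differentiating at $\Delta=0$ gives exactly the right-hand side of \eqref{hyouka-3}.

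Part~2 then only requires assembling these pieces. Since $x=z(0)$ and $y=z(1)-z(0)$ are identified with $z_\Delta(\tau)$ at $\tau=0$, $\Delta=1$, the exact identity of part~1 with $\tau=0$, $\Delta=1$ gives $\E[D(p(y\mid\lambda),p_\pi(y\mid x))\mid\lambda]=D\{p(z(1)\mid\lambda),p_\pi(z(1))\}-D\{p(z(0)\mid\lambda),p_\pi(z(0))\}$. By the fundamental theorem of calculus the right-hand side equals $\int^1_0\partial_\tau D\{p(z(\tau)\mid\lambda),p_\pi(z(\tau))\}\,\dd\tau$; hence by \eqref{hyouka-2} it equals $\int^1_0\frac{\partial}{\partial\Delta}\E[D\{p(z_\Delta(\tau)\mid\lambda),p_\pi(z_\Delta(\tau)\mid z(\tau))\}\mid\lambda]\big|_{\Delta=0}\,\dd\tau$, and by \eqref{hyouka-3} it equals $\int^1_0\E[\sum_i\dot{t}_i(\tau)\{\hat{\lambda}^\pi_i(z(\tau),\tau)-\lambda_i-\lambda_i\log(\hat{\lambda}^\pi_i(z(\tau),\tau)/\lambda_i)\}\mid\lambda]\,\dd\tau$. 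The identical chain holds with $\pi$ replaced by $\pi'$; subtracting the $\pi$-version from the $\pi'$-version produces \eqref{lemma1-2-1} and then \eqref{lemma1-2-2}.

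The one genuinely necessary technical point --- and the only place I expect real work --- is justifying the interchanges of the summation over $z$, the differentiation in $\Delta$ (or $\tau$), and the expectation over $z(\tau)$, that is, checking that the relevant series converge and are dominated uniformly for $\Delta$ near $0$. Since $z(\tau)$ and $z_\Delta(\tau)$ are Poisson with means $t_i(\tau)\lambda_i$ and $\{t_i(\tau+\Delta)-t_i(\tau)\}\lambda_i$, and $t_i$ and $\dot{t}_i$ are continuous, hence bounded, on the compact interval $[0,1]$, the tails decay super-exponentially and dominated convergence applies, which legitimizes the $O(\Delta^2)$ bookkeeping used above.
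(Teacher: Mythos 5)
Your proof is correct, and it splits naturally into a part that mirrors the paper and a part that does not. For \eqref{hyouka-2} and for part~2 you argue exactly as the paper does: the sufficiency of $z(\tau+\Delta)=z(\tau)+z_\Delta(\tau)$ makes the conditional law of $z(\tau)$ given $z(\tau+\Delta)$ parameter-free, which yields the telescoping identity (the paper's \eqref{eq:diff}), and part~2 is then the fundamental theorem of calculus applied to that identity. For \eqref{hyouka-3}, however, you take a genuinely different route. The paper differentiates $D\{p(z(\tau)\mid\lambda),p_\pi(z(\tau))\}$ in $\tau$ term by term, computing $\partial_\tau p(z(\tau)\mid\lambda)$ and $\partial_\tau p_\pi(z(\tau))$ explicitly and then invoking the Poisson shift identity of Lemma \ref{lemma-koukan-0} together with the posterior-mean ratio formula \eqref{13-B} to reassemble the result. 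You instead expand the infinitesimal divergence $D\{p(z_\Delta(\tau)\mid\lambda),p_\pi(z_\Delta(\tau)\mid z(\tau))\}$ directly to first order in $\Delta$, noting that only the zero-count and single-count configurations contribute at order $\Delta$ and that the predictive probabilities of these configurations are obtained from the true ones by replacing $\lambda_i$ with $\hat{\lambda}^\pi_i(z(\tau),\tau)$. Your route is more probabilistic and makes transparent why the infinitesimal prediction problem reduces to estimation under the Kullback--Leibler (entropy) loss --- the bracketed expression in \eqref{hyouka-3} appears as the literal small-$\Delta$ limit of a Poisson--Poisson divergence --- at the cost of having to control the $O(\Delta^2)$ remainder uniformly over the infinitely many configurations of $z_\Delta(\tau)$ and over $z(\tau)$; you correctly flag that this domination is the one real technical obligation, and it does hold here by the super-exponential Poisson tails. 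The paper's computation avoids any remainder bookkeeping but relies on the two auxiliary identities. Both arguments are sound.
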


\vspace{0.5cm}

Equation \eqref{hyouka-3} shows that infinitesimal Bayesian prediction based on $\pi$
corresponds to the Bayesian estimator $\hat{\lambda}_\pi$.
This fact is a generalization of a result discussed in \cite{Komaki:JournalOfMultivariateAnalysis:2006}
when $r_1=\cdots=r_d$ and $s_1= \cdots =s_d$.
By \eqref{lemma1-2-2}, if
\begin{align*}
\E & \left[ \sum_i \dot{t}_i(\tau) \biggl\{
\hat{\lambda}^{\pi'}_i (z(\tau),\tau) - \hat{\lambda}^{\pi}_i (z(\tau),\tau)
- \log \frac{\hat{\lambda}^{\pi'}_i (z(\tau),\tau)}{\hat{\lambda}^{\pi}_i (z(\tau),\tau)}
\biggr\}
\; \Bigg| \; \lambda \right]
\end{align*}
is positive for every $\tau \in [0,1]$ and $\lambda$,
then the risk of the Bayesian predictive distribution $p_\pi(y \mid x)$ is smaller than
that of $p_{\pi'}(y \mid x)$ for every $\lambda$.
Intuitively speaking, if the estimators $\hat{\lambda}^{\pi}_i (\cdot,\tau)$ based on $\pi$
is superior in the risk \eqref{hyouka-3} for all $\tau \in [0,1]$, then
the Bayesian predictive density $p_\pi(y \mid x)$ is superior in the Kullback--Leibler risk.

\section{Bayesian prediction and estimation}
We introduce a function to represent Bayesian predictive densities and estimators based on $\pi_{\alpha,\beta,\gamma}$.

\vspace{0.5cm}

\begin{definition}
Suppose that $\gamma \in \mathbb{R}^d$, $\gamma_i > 0 ~(i=1,\ldots,d)$,
$x \in \mathbb{R}^d$, $x_{\Cdot} > 0$, and $0< \alpha < x_{\Cdot}$.
Define
\[
K(\gamma, x, \alpha) := \int^\infty_0 u^{\alpha-1} \prod^d_{i=1} \frac{1}{(u/\gamma_i + 1)^{x_i}} \dd u.
\]
\qed
\end{definition}

\vspace{0.5cm}

When $\gamma_1 = \dotsb = \gamma_d$,
\begin{align*}
K(\gamma, x, \alpha) & = \int^\infty_0 \frac{u^{\alpha-1}}{(u/\gamma_1 + 1)^{{x_{\Cdot}}}} \dd u
= \gamma_1^\alpha B (x_{\Cdot} - \alpha, \alpha).
\end{align*}
Thus, $K(\gamma, x, \alpha)$ is a generalization of the beta function.

Lemma \ref{lemma-p_abc} below gives explicit forms of Bayesian predictive densities based on $\pi_\beta$
and $\pi_{\alpha,\beta,\gamma}$.

\vspace{0.5cm}

\begin{lemma}
\label{lemma-p_abc}
Suppose that $z_i(\tau)$ $(i=1,\ldots,d)$ are independent time-inhomogeneous Poisson processes
with mean $t_i(\tau) \lambda_i$.
Let $z_\Delta(\tau) = z(\tau+\Delta) - z(\tau)$,
where $\tau \in [0,1)$ and $\Delta \in (0,1-\tau]$.
\begin{description}
\item{~1)~} The Bayesian predictive density based on the prior
$\pi_\beta(\lambda) = \lambda_1^{\beta_1-1} \dotsb \lambda_d^{\beta_d-1}$,
where $\beta_i > 0$ $(i=1,\ldots,d)$, is given by
\begin{align*}
p_\beta & (z_\Delta(\tau) \mid z(\tau)) =
\prod^d_{i=1}
\left\{
\frac{\Gamma (z_i + (z_\Delta)_i + \beta_i)}
{\Gamma (z_i + \beta_i) (z_\Delta)_i!}
\frac{\{t_i(\tau)\}^{z_i + \beta_i}
\{t_i(\tau+\Delta) - t_i(\tau)\}^{(z_\Delta)_i}}
{\{t_i(\tau+\Delta)\}^{z_i + (z_\Delta)_i + \beta_i}}
\right\},
\end{align*}
which is a product of negative binomial densities.
In particular, when $\tau = 0$ and $\Delta = 1$,
\begin{align*}
p_\beta(y \mid x)
= &
\prod_{i=1}^d \left\{ \frac{\Gamma(x_i + y_i + \beta_i)}{\Gamma(x_i + \beta_i) y_i !}
 \frac{r_i^{x_i+\beta_i} s_i^{y_i}}{(r_i + s_i)^{x_i + y_i + \beta_i}}
\right\},
\end{align*}
where $r_i = t_i(0)$, $r_i+s_i = t_i(1)$, $x=z(1)$, and $y=z_{\Delta=1}(0)$.
\item{~2)~} The Bayesian predictive density based on the prior
$\pi_{\alpha,\beta,\gamma}(\lambda)
= \lambda_1^{\beta_1-1} \dotsb \lambda_d^{\beta_d-1}/
(\lambda_1/\gamma_1 + \cdots + \lambda_d/\gamma_d)^\alpha$,
where $0 < \alpha < \beta_{\Cdot}$, $\beta_i > 0$, and $\gamma_i > 0$ $(i=1,\ldots,d)$,
is given by
\begin{align*}
p_{\alpha,\beta,\gamma} (z_\Delta(\tau) \mid z(\tau)) 
=&\
p_\beta (z_\Delta(\tau) \mid z(\tau))
\dfrac{\displaystyle
\int_0^\infty u^{\alpha-1}
\prod^d_{j=1} \frac{1}{\{\frac{u}{t_j(\tau+\Delta) \gamma_j} + 1\}^{z_j + (z_\Delta)_j + \beta_j}} \dd u
}
{\displaystyle
\int_0^\infty u^{\alpha-1}
\prod^d_{j=1} \frac{1}{\{\frac{u}{t_j(\tau) \gamma_j} + 1\}^{z_j + \beta_j}} \dd u
} \\
=&\
p_\beta (z_\Delta(\tau) \mid z(\tau))
\dfrac{K(t(\tau+\Delta) \gamma, z+z_\Delta+\beta, \alpha)}{K(t(\tau) \gamma, z+\beta, \alpha)},
\end{align*}
where $t \gamma := (t_1 \gamma_1, t_2 \gamma_2, \ldots, t_d \gamma_d)$.

In particular, when $\tau = 0$ and $\Delta = 1$,
\begin{align*}
p_{\alpha,\beta,\gamma} (y \mid x)
=&\ p_\beta(y \mid x)
\dfrac{\displaystyle
\int u^{\alpha-1}
\prod^d_{j=1} \frac{1}{\{\frac{u}{(r_j+s_j)\gamma_j} + 1 \}^{x_j + y_j + \beta_j}} \dd u
}
{\displaystyle
\int u^{\alpha-1}
\prod^d_{j=1} \frac{1}{(\frac{u}{r_j \gamma_j} + 1)^{x_j + \beta_j}} \dd u
} \\
=&\
p_\beta (y \mid x)
\dfrac{K((r+s) \gamma, x+y+\beta, \alpha)}{K(r \gamma, x+\beta, \alpha)},
\end{align*}
where $r_i = t_i(0)$, $r_i+s_i = t_i(1)$, $x=z(0)$, $y = z_{\Delta=1}(0)$,
$r \gamma := (r_1 \gamma_1, \ldots, r_d \gamma_d)$, and
$(r+s) \gamma := ((r_1+s_1) \gamma_1, \ldots, (r_d+s_d) \gamma_d)$.
\end{description}
\qed
\end{lemma}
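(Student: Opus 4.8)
The plan is to exploit the fact that the model $p(z(\tau)\mid\lambda)$, the increment density $p(z_\Delta(\tau)\mid\lambda)$, and both priors factor over the index $i$, and to linearize the awkward denominator of $\pi_{\alpha,\beta,\gamma}$ by the auxiliary identity
\[
\frac{1}{(\lambda_1/\gamma_1+\dots+\lambda_d/\gamma_d)^{\alpha}}
=\frac{1}{\Gamma(\alpha)}\int_0^\infty u^{\alpha-1}\,\ee^{-u(\lambda_1/\gamma_1+\dots+\lambda_d/\gamma_d)}\,\dd u ,
\]
valid for $\alpha>0$ and $\lambda$ in the open positive orthant. For Part~1, set $a_i:=t_i(\tau)$ and $b_i:=t_i(\tau+\Delta)-t_i(\tau)$, so that $a_i+b_i=t_i(\tau+\Delta)$ and $a_i,b_i>0$. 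Since everything factors, it suffices to treat a single coordinate: under $\pi_\beta$ the posterior of $\lambda_i$ given $z_i(\tau)=z_i$ is proportional to $\lambda_i^{z_i+\beta_i-1}\ee^{-a_i\lambda_i}$, i.e.\ a $\mathrm{Gamma}(z_i+\beta_i,a_i)$ density, which is proper because $\beta_i>0$; mixing the $\mathrm{Poisson}(b_i\lambda_i)$ law of $(z_\Delta)_i$ against this posterior and evaluating the one remaining gamma integral produces a negative binomial density, and collecting the $\Gamma$-factors gives exactly the stated expression. The displayed special case is the substitution $\tau=0$, $\Delta=1$ with $t_i(0)=r_i$ and $t_i(1)=r_i+s_i$.

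For Part~2, insert the auxiliary identity into both the numerator $\int p(z_\Delta(\tau)\mid\lambda)\,p(z(\tau)\mid\lambda)\,\pi_{\alpha,\beta,\gamma}(\lambda)\,\dd\lambda$ and the denominator $\int p(z(\tau)\mid\lambda)\,\pi_{\alpha,\beta,\gamma}(\lambda)\,\dd\lambda$ of the Bayesian predictive density, and interchange the $\dd u$- and $\dd\lambda$-integrations, which is legitimate since all integrands are nonnegative. For each fixed $u$ the inner $\lambda$-integral factors over $i$, and each factor is precisely the gamma integral of Part~1 but with the rate $a_i$ (respectively $a_i+b_i$) replaced by $a_i+u/\gamma_i$ (respectively $a_i+b_i+u/\gamma_i$). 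Carrying out these integrals and normalizing the resulting denominators $(a_i+u/\gamma_i)^{z_i+\beta_i}$ and $(a_i+b_i+u/\gamma_i)^{z_i+(z_\Delta)_i+\beta_i}$ so that the $u$-dependence enters only through $1+u/(t_i(\tau)\gamma_i)$ and $1+u/(t_i(\tau+\Delta)\gamma_i)$, one recognizes the surviving $\dd u$-integrals, in the notation of Definition~1, as $K(t(\tau)\gamma,z+\beta,\alpha)$ and $K(t(\tau+\Delta)\gamma,z+z_\Delta+\beta,\alpha)$. Thus the numerator and the denominator are each an explicit product over $i$ of $\Gamma$-factors times the corresponding $K$. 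Forming the ratio, the constant $1/\Gamma(\alpha)$ and the factors $a_i^{z_i}/z_i!$ cancel, and the product over $i$ that remains is exactly $p_\beta(z_\Delta(\tau)\mid z(\tau))$ from Part~1; this yields the first identity of Part~2, the second is simply that identity with the two $K$'s written out via their defining integrals, and the $\tau=0$, $\Delta=1$ form follows by the same substitution as before.

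It remains, as bookkeeping along the way, to confirm that the $K$'s in question are finite and positive, so that the conditional density is well defined and Definition~1 applies: near $u=0$ the factor $u^{\alpha-1}$ is integrable because $\alpha>0$, while near $u=\infty$ the integrand of each $K$ decays at least like $u^{\alpha-1-(z_{\Cdot}+\beta_{\Cdot})}$, which is integrable because $\alpha<\beta_{\Cdot}\le z_{\Cdot}+\beta_{\Cdot}$; these are exactly the standing hypotheses $0<\alpha<\beta_{\Cdot}$ and $\beta_i>0$, which also ensure that the summed first argument of each $K$ exceeds $\alpha$ as required. There is no conceptual difficulty here; the main obstacle is purely the bookkeeping in Part~2 --- faithfully tracking the $\Gamma$-constants through the numerator and the denominator and verifying that everything except one copy of $p_\beta$ and the ratio of $K$'s cancels --- together with the routine justification of the interchange of integrations.
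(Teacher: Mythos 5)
Your proposal is correct and follows essentially the same route as the paper: Part 1 by evaluating the coordinatewise gamma integrals for the marginal likelihoods (equivalently, the Gamma posterior and its Poisson mixture), and Part 2 by linearizing the denominator $(\sum_i \lambda_i/\gamma_i)^{-\alpha}$ with the $\Gamma(\alpha)$ integral identity, swapping the $u$- and $\lambda$-integrations, and normalizing to recognize the $K$ functions. Your added check that each $K$ is finite (integrability at $0$ from $\alpha>0$ and at $\infty$ from $\alpha<\beta_{\Cdot}\le z_{\Cdot}+\beta_{\Cdot}$) is a small but welcome addition that the paper leaves implicit.
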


\vspace{0.5cm}

Lemma \ref{lemma-estimators} below gives explicit forms of Bayesian estimators based on $\pi_\beta$
and $\pi_{\alpha,\beta,\gamma}$.

\vspace{0.5cm}

\begin{lemma}
\label{lemma-estimators}
Suppose that $z_i(\tau)$ $(i=1,\ldots,d)$ are independently distributed according to
the Poisson distribution with mean $t_i(\tau) \lambda_i$.
\begin{description}
 \item{~1)~\;}
The posterior mean
of $\lambda$ with respect to the observation
$z(\tau) = (z_1,\ldots,z_d)$ and
the prior $\pi_\beta(\lambda) = \lambda_1^{\beta_1-1} \dotsb \lambda_d^{\beta_d-1}$,
where $\beta_i > 0$ $(i=1,\ldots,d)$, is given by
\begin{align*}
\hat{\lambda}^{(\beta)}_i(z,\tau) :=& \frac{z_i + \beta_i}{t_i(\tau)}.
\end{align*}
 \item{~2)~\;}
The posterior mean 
of $\lambda$ with respect to the observation
$z(\tau) = (z_1,\ldots,z_d)$ and
the prior $\pi_{\alpha,\beta,\gamma} = \lambda_1^{\beta_1-1} \dotsb \lambda_d^{\beta_d-1}/
(\lambda_1/\gamma_1 + \cdots + \lambda_d/\gamma_d)^\alpha$,
where $0 < \alpha < \beta_{\Cdot}$, $\beta_i > 0$, and $\gamma_i > 0$ $(i=1,\ldots,d)$, is given by
\begin{align*}
\hat{\lambda}_i^{(\alpha,\beta,\gamma)}(z,\tau) :=& \hat{\lambda}^{(\beta)}_i(z,\tau)
\dfrac{\displaystyle
 \int u^{\alpha-1}
\prod^d_{j=1} \frac{1}{\left\{\frac{u}{t_j(\tau) \gamma_j} + 1 \right\}^{z_j + \beta_j +\delta_{ij}}} \dd u}
{\displaystyle
 \int u^{\alpha-1} \prod^d_{j=1} \frac{1}{\left\{\frac{u}{t_j(\tau) \gamma_j} + 1 \right\}^{z_j + \beta_j}} \dd u} \\
=& \hat{\lambda}^{(\beta)}_i(z,\tau) \frac{K(t\gamma,z+\beta+\delta_i, \alpha)}{K(t\gamma,z+\beta,\alpha)},
\end{align*}
where $\delta_{ij}$ is defined to be 1 if $i=j$ and 0 if $i \neq j$,
and $\delta_i$ is defined to be the $d$-dimensional vector whose $i$-th element is $1$ and all other elements are $0$.
\end{description}
\qed
\end{lemma}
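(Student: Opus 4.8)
The plan is to compute the two posterior means directly from their definitions, using the integral representation of $\pi_{\alpha,\beta,\gamma}$ to reduce everything to the function $K$. Part 1) is the classical conjugate computation: with a Poisson likelihood $p(z(\tau)\mid\lambda)\propto\prod_i \lambda_i^{z_i}\ee^{-t_i(\tau)\lambda_i}$ and prior $\pi_\beta(\lambda)\propto\prod_i\lambda_i^{\beta_i-1}$, the posterior factorizes into independent Gamma densities with shape $z_i+\beta_i$ and rate $t_i(\tau)$, whose mean is $(z_i+\beta_i)/t_i(\tau)$. I would state this in one or two lines.

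For part 2), the key device is the Gamma-integral identity $w^{-\alpha}=\Gamma(\alpha)^{-1}\int_0^\infty u^{\alpha-1}\ee^{-wu}\,\dd u$ for $w>0$, $\alpha>0$, applied with $w=\sum_j \lambda_j/\gamma_j$. This rewrites
\[
\pi_{\alpha,\beta,\gamma}(\lambda)=\frac{\prod_j\lambda_j^{\beta_j-1}}{(\sum_j\lambda_j/\gamma_j)^\alpha}
=\frac{1}{\Gamma(\alpha)}\int_0^\infty u^{\alpha-1}\prod_j\lambda_j^{\beta_j-1}\ee^{-u\lambda_j/\gamma_j}\,\dd u,
\]
so that $\pi_{\alpha,\beta,\gamma}$ is a scale mixture of products of Gamma-type kernels. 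Substituting this into both the numerator $\int\lambda_i\,p(z(\tau)\mid\lambda)\pi_{\alpha,\beta,\gamma}(\lambda)\,\dd\lambda$ and the denominator $\int p(z(\tau)\mid\lambda)\pi_{\alpha,\beta,\gamma}(\lambda)\,\dd\lambda$, I would interchange the $u$-integral with the $\lambda$-integral (justified by Tonelli, as all integrands are nonnegative) and perform the inner $\lambda$-integral explicitly: for each $j$,
\[
\int_0^\infty \lambda_j^{z_j+\beta_j-1+\delta_{ij}}\ee^{-(t_j(\tau)+u/\gamma_j)\lambda_j}\,\dd\lambda_j
=\frac{\Gamma(z_j+\beta_j+\delta_{ij})}{(t_j(\tau)+u/\gamma_j)^{z_j+\beta_j+\delta_{ij}}}.
\]
Pulling out the $\tau$-dependent factors $t_j(\tau)^{-(z_j+\beta_j+\delta_{ij})}$ and rescaling $u\mapsto u$ so that the denominators become $(u/(t_j(\tau)\gamma_j)+1)^{z_j+\beta_j+\delta_{ij}}$, the numerator collapses to a constant times $K(t\gamma,z+\beta+\delta_i,\alpha)$ and the denominator to a constant times $K(t\gamma,z+\beta,\alpha)$. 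Taking the ratio, the $\Gamma(\alpha)$ and most $\Gamma(z_j+\beta_j)$ factors cancel; the surviving factor $\Gamma(z_i+\beta_i+1)/\Gamma(z_i+\beta_i)\cdot t_i(\tau)^{-1}=(z_i+\beta_i)/t_i(\tau)=\hat\lambda^{(\beta)}_i(z,\tau)$ produces exactly the claimed formula.

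The only point requiring a little care — the main (mild) obstacle — is checking convergence so that all the integrals and interchanges are legitimate: the inner $\lambda$-integrals converge since $z_j+\beta_j+\delta_{ij}>0$; the outer $u$-integral converges at $u=0$ because $\alpha>0$ and at $u=\infty$ because $\sum_j(z_j+\beta_j+\delta_{ij})\ge\beta_{\Cdot}+\delta_{i\cdot}>\alpha$ (here the hypothesis $\alpha<\beta_{\Cdot}$ is used, and adding $\delta_i$ only helps), so $K(t\gamma,z+\beta+\delta_i,\alpha)$ and $K(t\gamma,z+\beta,\alpha)$ are both finite and positive; hence the ratio is well defined and the Tonelli interchange is valid. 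With this in hand the computation is routine bookkeeping of Gamma factors.
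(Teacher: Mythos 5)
Your proposal is correct and follows essentially the same route as the paper's proof: the conjugate Gamma computation for part 1), and for part 2) the Gamma-integral representation $(\sum_j\lambda_j/\gamma_j)^{-\alpha}=\Gamma(\alpha)^{-1}\int_0^\infty u^{\alpha-1}\exp(-u\sum_j\lambda_j/\gamma_j)\,\dd u$ followed by interchanging the $u$- and $\lambda$-integrals and collecting Gamma factors, which is exactly the paper's equation \eqref{priorintegral} and the subsequent computation. Your explicit convergence check (using $\alpha<\beta_{\Cdot}$ to ensure both $K$ integrals are finite) is a welcome addition that the paper leaves implicit.
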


\vspace{0.5cm}

Let
\begin{align}
\label{coeff}
f_i(t\gamma, z+\beta, \alpha)
:= \frac{K(t\gamma, z+\beta+\delta_i, \alpha)}{K(t\gamma, z+\beta, \alpha)}.
\end{align}
Then,
\begin{align*}
\hat{\lambda}_i^{(\alpha,\beta,\gamma)}(z,\tau) =& \hat{\lambda}_i^{(\beta)}(z,\tau) f_i(t(\tau) \gamma, z+\beta, \alpha).
\end{align*}
Obviously, $0< f_i(t \gamma, z + \beta,\alpha) < 1$.
This inequality is natural because $\pi_{\alpha, \beta, \gamma}$ is a shrinkage prior.

\vspace{0.5cm}

In particular, if $t_1\gamma_1 = \cdots = t_d\gamma_d$, then
\begin{align*}
f_i(t \gamma, z+\beta, \alpha)
=&\ \frac{(t_1 \gamma_1)^\alpha B(z_{\Cdot} + \beta_{\Cdot} + 1 - \alpha, \alpha)}
{(t_1 \gamma_1)^\alpha B(z_{\Cdot} + \beta_{\Cdot} - \alpha, \alpha)}
= \frac{z_{\Cdot} + \beta_{\Cdot} - \alpha}{z_{\Cdot} + \beta_{\Cdot}},
\end{align*}
which does not depend on $t_1 \gamma_1$.

Now, we give the main theorem.

\vspace{0.5cm}

\begin{thm}
\label{maintheorem}
Suppose that $x_i$ and $y_i$ $(i=1,\ldots,d)$ are independently distributed
according to the Poisson distributions
with mean $r_i \lambda_i$ and $s_i \lambda_i$, respectively.
Let $p_\beta (y \mid x)$ be the Bayesian predictive density based on
$\pi_\beta(\lambda) = \lambda_1^{\beta_1-1} \dotsb \lambda_d^{\beta_d-1}$.
Assume that $\beta_{\Cdot} > 1$.
Let $\pi^*_\beta(\lambda) := \pi_{\alpha,\beta,\gamma}(\lambda) = \lambda_1^{\beta_1-1} \dotsb \lambda_d^{\beta_d-1}/
(\lambda_1/\gamma_1 + \cdots + \lambda_d/\gamma_d)^\alpha$ with
\begin{align*}
\alpha = \beta_{\Cdot} - 1 ~~ \text{and} ~~~
\gamma_i = \frac{1}{r_i} - \frac{1}{r_i+s_i} ~~~~ (i=1,\ldots,d).
\end{align*}
Then, the risk of the Bayesian predictive density
\begin{align*}
p_\beta^* (y \mid x)
= p_\beta (y \mid x)
\dfrac{
K \biggl( \dfrac{s}{r}, x+y+\beta, \alpha \biggr)
}
{
K \biggl( \dfrac{s}{r+s}, x+\beta, \alpha \biggr)
}
\end{align*}
based on $\pi_\beta^*$,
where
\begin{align*}
\frac{s}{r} := \left( \frac{s_1}{r_1}, \ldots, \frac{s_d}{r_d} \right) ~~ \text{and} ~~~
\frac{s}{r+s} := \left( \frac{s_1}{r_1 + s_1}, \ldots, \frac{s_d}{r_d + s_d} \right),
\end{align*}
is smaller than that of $p_\beta (y \mid x)$ for every $\lambda$.
\qed
\end{thm}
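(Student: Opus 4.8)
The plan is to apply Lemma~\ref{smallprediction}(2) with the time parametrization $t_i(\tau)$ defined by $1/t_i(\tau)=(1-\tau)/r_i+\tau/(r_i+s_i)$, i.e.\ $\dot t_i(\tau)=\gamma_i t_i(\tau)^2$ with $\gamma_i=1/r_i-1/(r_i+s_i)$ exactly as in the theorem. This $t_i$ is smooth and increasing on $[0,1]$ with $t_i(0)=r_i$ and $t_i(1)=r_i+s_i$, and $t_i(0)\gamma_i=s_i/(r_i+s_i)$, $t_i(1)\gamma_i=s_i/r_i$, so by Lemma~\ref{lemma-p_abc} the density $p_\beta^*$ of the statement is the Bayesian predictive density $p_{\alpha,\beta,\gamma}$. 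By Lemma~\ref{smallprediction}(2) the risk of $p_\beta(y\mid x)$ minus that of $p_\beta^*(y\mid x)$ equals $\int_0^1 G(\tau,\lambda)\,\dd\tau$ with
\[
G(\tau,\lambda):=\E\Bigl[\,\sum_i\dot t_i(\tau)\bigl\{\hat\lambda^{(\beta)}_i-\hat\lambda^{(\alpha,\beta,\gamma)}_i-\lambda_i\log(\hat\lambda^{(\beta)}_i/\hat\lambda^{(\alpha,\beta,\gamma)}_i)\bigr\}\ \Big|\ \lambda\,\Bigr],
\]
estimators evaluated at $(z(\tau),\tau)$; so it suffices to show $G(\tau,\lambda)>0$ for all $\tau\in[0,1]$, $\lambda\in(0,\infty)^d$. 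By Lemma~\ref{lemma-estimators} and \eqref{coeff}, with $N_i:=t_i(\tau)\gamma_i$, $m_i:=t_i(\tau)\lambda_i$ (the Poisson mean of $z_i(\tau)$) and $\dot t_i=\gamma_i t_i^2$, we get $\dot t_i\hat\lambda^{(\beta)}_i=N_i(z_i+\beta_i)$, $\dot t_i\lambda_i=N_i m_i$, $\hat\lambda^{(\alpha,\beta,\gamma)}_i=\hat\lambda^{(\beta)}_i f_i$ with $f_i:=K(N,z+\beta+\delta_i,\alpha)/K(N,z+\beta,\alpha)\in(0,1)$, $N:=(N_1,\dots,N_d)$, hence
\[
G(\tau,\lambda)=\E\Bigl[\,\sum_i N_i\bigl\{(z_i+\beta_i)(1-f_i)+m_i\log f_i\bigr\}\ \Big|\ \lambda\,\Bigr],\qquad z_i\sim\mathrm{Poisson}(m_i)\ \text{independent}.
\]

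The main device is an auxiliary variable: for fixed $z$, since $0<\alpha<\beta_{\Cdot}\le z_{\Cdot}+\beta_{\Cdot}$, let $U=U(z)$ have density on $(0,\infty)$ proportional to $u^{\alpha-1}\prod_j(1+u/N_j)^{-(z_j+\beta_j)}$. The integral representation of $K$ gives $f_i=\E_{U\mid z}[(1+U/N_i)^{-1}]$, hence $1-f_i=\E_{U\mid z}[U/(N_i+U)]>0$, and a short rearrangement of the defining integrals gives $(1-f_i)/f_i=N_i^{-1}\E_{U\mid z+\delta_i}[U]$, where $U\mid z+\delta_i$ is attached to $z+\delta_i$ (finite, as $(z+\delta_i)_{\Cdot}\ge1$). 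Integrating over $(0,\infty)$ the identity
\[
\frac{\dd}{\dd u}\Bigl[u^{\alpha+1}\prod_j(1+u/N_j)^{-(z_j+\beta_j)}\Bigr]=u^{\alpha}\prod_j(1+u/N_j)^{-(z_j+\beta_j)}\Bigl[(\alpha+1)-u\sum_i\frac{z_i+\beta_i}{N_i+u}\Bigr],
\]
whose boundary terms vanish when $z_{\Cdot}\ge1$ precisely because $\alpha=\beta_{\Cdot}-1$, yields $\E_{U\mid z}[U^2\sum_i(z_i+\beta_i)/(N_i+U)]=(\alpha+1)\E_{U\mid z}[U]$. Since $N_i(z_i+\beta_i)(1-f_i)=(z_i+\beta_i)\E_{U\mid z}[U-U^2/(N_i+U)]$, summing over $i$ and using $\alpha=\beta_{\Cdot}-1$ give, for $z_{\Cdot}\ge1$,
\[
\sum_i N_i(z_i+\beta_i)(1-f_i)=(z_{\Cdot}+\beta_{\Cdot}-\alpha-1)\,\E_{U\mid z}[U]=z_{\Cdot}\,\E_{U\mid z}[U];
\]
at $z=0$ the left side is $\sum_i N_i\beta_i(1-f_i)>0$.

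To finish, write $G(\tau,\lambda)=A-B$ with $A:=\E[\sum_i N_i(z_i+\beta_i)(1-f_i)\mid\lambda]$ and $B:=\E[\sum_i N_i m_i\log(1/f_i)\mid\lambda]$. The previous paragraph gives $A\ge\E[z_{\Cdot}\E_{U\mid z}[U]\mid\lambda]$, reading $z_{\Cdot}\E_{U\mid z}[U]$ as $0$ at $z=0$ (the $z=0$ term only enlarges $A$). Next, $\log(1/f_i)\le 1/f_i-1=(1-f_i)/f_i$, strictly since $f_i<1$ for every $z$; using this, then $(1-f_i)/f_i=N_i^{-1}\E_{U\mid z+\delta_i}[U]$, then the Poisson identity $m_i\E[h(z+\delta_i)\mid\lambda]=\E[z_i h(z)\mid\lambda]$ with $h(z)=\E_{U\mid z}[U]$,
\[
B<\E\Bigl[\,\sum_i N_i m_i\frac{1-f_i}{f_i}\ \Big|\ \lambda\,\Bigr]=\E\Bigl[\,\sum_i m_i\E_{U\mid z+\delta_i}[U]\ \Big|\ \lambda\,\Bigr]=\E\bigl[z_{\Cdot}\E_{U\mid z}[U]\mid\lambda\bigr].
\]
Hence $A\ge\E[z_{\Cdot}\E_{U\mid z}[U]\mid\lambda]>B$, so $G(\tau,\lambda)>0$ for all $\tau$ and $\lambda$, and integrating over $\tau$ gives the claim. (All expectations are finite: $0\le(1-f_i)/f_i\le N_i^{-1}(\max_j N_j)(1+\beta_{\Cdot})$ and $A\le\sum_i N_i(m_i+\beta_i)$.)

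The hard part will be this pair of reductions pinning both $A$ and $B$ to the single quantity $\E[z_{\Cdot}\E_{U\mid z}[U]\mid\lambda]$: finding the right auxiliary variable $U$ and checking that the boundary terms in the $u^{\alpha+1}$ identity cancel exactly when $\alpha=\beta_{\Cdot}-1$ is the crux, and the atom $z=0$ (where $\E_{U\mid z}[U]=\infty$ but $z_{\Cdot}=0$) needs care — it is exactly what makes the first reduction the inequality $A\ge\E[z_{\Cdot}\E_{U\mid z}[U]\mid\lambda]$ and not an equality, which is the direction we need.
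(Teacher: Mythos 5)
Your proof is correct and follows essentially the same route as the paper: the same harmonic-time parametrization, the same reduction via Lemma~\ref{smallprediction} to positivity of the infinitesimal risk difference, and the same two estimates --- your latent-variable/integration-by-parts identity $\sum_i N_i(z_i+\beta_i)(1-f_i)=z_{\Cdot}\,\E_{U\mid z}[U]$ is exactly the paper's combination of Lemma~\ref{lemma-Ksekibun}(1)--(3), and your bound on $B$ is the paper's application of Lemma~\ref{lemma-koukan-0} together with $\log(1+w)\le w$, merely applied in the opposite order. The auxiliary variable $U$ is a probabilistic repackaging of the kernel $K$ rather than a different method, and your handling of the $z=0$ atom and of the boundary term requiring $\alpha=\beta_{\Cdot}-1$ matches the paper's case split on $z_{\Cdot}$.
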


\vspace{0.5cm}

If $d \geq 3$, there exists a Bayesian predictive density dominating that
based on the Jeffreys prior \eqref{jeffreysx}
for $p(x \mid \lambda)$ because $\beta_{\Cdot} = d/2 > 1$,
as in the simple setting with $r_1 = \cdots = r_d$ and $s_1 = \cdots = s_d$ studied in \cite{Komaki:TheAnnalsOfStatistics:2004}.
Note that the prior $\pi^*_\beta$ depends on $r$ and $s$.

Before proving Theorem 1, we prepare Lemmas \ref{lemma-koukan-0} and \ref{lemma-Ksekibun} below.

\vspace{0.5cm}

\begin{lemma}
\label{lemma-koukan-0}
Let $h(x)$ be a real valued function of $x = (x_1,\ldots,x_d) \in \mathbb{N}_0^d$,
where $\mathbb{N}_0$ is the set of nonnegative integers.
Suppose that $x_i$ $(i=1,\ldots, d)$ are independently distributed according to
the Poisson distribution with mean $\lambda_i$.
If $\E \bigl[|x_i h(x)| \mid \lambda \bigr] < \infty$, then
\begin{align*}
\E[x_i h(x) \mid \lambda] =& \E[\lambda_i h(x + \delta_i) \mid \lambda].
\end{align*}
\qed
\end{lemma}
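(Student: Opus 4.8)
The plan is to verify the identity by a direct manipulation of the Poisson sum, exploiting the familiar recursion $x_i \cdot \dfrac{\lambda_i^{x_i}}{x_i!}\ee^{-\lambda_i} = \lambda_i \cdot \dfrac{\lambda_i^{x_i-1}}{(x_i-1)!}\ee^{-\lambda_i}$ that underlies the well-known Stein--Chen / Hudson-type identity for the Poisson family. Concretely, write
\[
\E[x_i h(x) \mid \lambda]
= \sum_{x \in \mathbb{N}_0^d} x_i\, h(x) \prod_{j=1}^d \frac{\lambda_j^{x_j}}{x_j!}\ee^{-\lambda_j},
\]
note that every term with $x_i = 0$ vanishes, and in the remaining sum substitute $x_i \mapsto x_i + 1$ (equivalently, reindex the $i$-th coordinate). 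The factor $x_i$ cancels one unit from the factorial, producing $\lambda_i$ out front and leaving $h(x+\delta_i)$ inside, which yields exactly $\E[\lambda_i h(x+\delta_i)\mid\lambda]$.

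The only point requiring care is the interchange of summation order implicit in the reindexing: this is legitimate precisely under the stated hypothesis $\E\bigl[|x_i h(x)|\mid\lambda\bigr] < \infty$, which guarantees absolute convergence of the original series and hence of the reindexed one (the reindexed series has the same absolute value termwise after the cancellation, since $\lambda_i \,|h(x+\delta_i)|\, \prod_j \lambda_j^{x_j}/x_j!\,\ee^{-\lambda_j}$ summed over $x$ equals $\E[|x_i h(x)|\mid\lambda]$). So I would first record that absolute convergence, then perform the termwise rearrangement with confidence.

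I do not anticipate a genuine obstacle here; the result is an elementary and standard Poisson integration-by-parts identity, and the lemma is stated only to be invoked later (presumably to convert expressions of the form $\E[z_i(\tau)\,h(z(\tau))\mid\lambda]$ arising in \eqref{hyouka-3} and the explicit estimators of Lemma \ref{lemma-estimators} into expressions involving $\lambda_i$, so that the risk-difference integrand can be bounded). If anything needs a sentence of attention, it is simply noting that the finiteness hypothesis is exactly what makes the formal manipulation rigorous, so the proof should state that dependence explicitly rather than gloss over it.
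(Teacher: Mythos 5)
Your proof is correct and is essentially identical to the paper's: the paper also writes out the Poisson sum, uses the cancellation $x_i\lambda_i^{x_i}/x_i! = \lambda_i\lambda_i^{x_i-1}/(x_i-1)!$, and reindexes the $i$-th coordinate. Your explicit remark that the hypothesis $\E[|x_i h(x)|\mid\lambda]<\infty$ justifies the reindexing is a small point of care that the paper leaves implicit, but the argument is the same.
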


\vspace{0.5cm}

\begin{lemma}
\label{lemma-Ksekibun}
Suppose that $\gamma \in \mathbb{R}^d$, $\gamma_i > 0 ~(i=1,\ldots,d)$,
$x \in \mathbb{R}^d$, $x_{\Cdot} > 0$, and $0< \alpha < x_{\Cdot}$.
Then, the following relations hold.
\begin{description}
\item{~1)~}
\begin{equation}
\alpha K(\gamma, x, \alpha) = \sum_{i=1}^d \frac{x_i}{\gamma_i} K (\gamma, x + \delta_i, \alpha+1).
\label{lemma-1-0}
\end{equation}
\item{~2)~}
\begin{equation}
\gamma_i K(\gamma, x, \alpha) = K(\gamma, x + \delta_i, \alpha+1) + \gamma_i K (\gamma, x + \delta_i, \alpha).
\label{lemma-2-0}
\end{equation}
\item{~3)~}
Let $b = (b_1,b_2,\ldots,b_d) \in \mathbb{R}^d$.
Then,
\begin{align}
\sum_{i=1}^d b_i K (\gamma, x+ \delta_i, \alpha)
= & \sum_{i=1}^d \left(\frac{b_{\Cdot} x_i}{\alpha \gamma_i} - \frac{b_i}{\gamma_i} \right) K(\gamma, x + \delta_i, \alpha + 1) .
\label{lemma-3-0}
\end{align}
\end{description}
\qed
\end{lemma}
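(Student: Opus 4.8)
The plan is to establish the three identities by direct manipulation of the defining integral $K(\gamma,x,\alpha) = \int_0^\infty u^{\alpha-1}\prod_{i=1}^d (u/\gamma_i+1)^{-x_i}\,\dd u$: integration by parts for 1), a partial-fraction-type algebraic identity for 2), and a purely algebraic combination of 1) and 2) for 3). Throughout, the hypotheses $\gamma_i>0$, $x_{\Cdot}>0$ and $0<\alpha<x_{\Cdot}$ guarantee that every $K$-function appearing in the statement lies in its domain of definition, so all integrals converge absolutely and the manipulations are legitimate; checking this is the only bookkeeping point that needs attention. In particular $K(\gamma,x+\delta_i,\alpha+1)$ requires $\alpha+1<x_{\Cdot}+1$, i.e.\ $\alpha<x_{\Cdot}$, which is exactly the standing hypothesis, and $K(\gamma,x+\delta_i,\alpha)$ requires $\alpha<x_{\Cdot}+1$, which is weaker.

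For 1), write $\alpha u^{\alpha-1} = (\dd/\dd u)(u^\alpha)$ and integrate by parts:
\[
\alpha K(\gamma,x,\alpha)
= \Bigl[\, u^\alpha \prod_i (u/\gamma_i+1)^{-x_i}\,\Bigr]_0^\infty
- \int_0^\infty u^\alpha\, \frac{\dd}{\dd u}\prod_i (u/\gamma_i+1)^{-x_i}\,\dd u .
\]
The boundary term vanishes: at $u=0$ because $\alpha>0$, and as $u\to\infty$ because the bracketed expression decays like $u^{\alpha-x_{\Cdot}}$ with $\alpha<x_{\Cdot}$. Differentiating the product gives $-\sum_i (x_i/\gamma_i)\prod_j (u/\gamma_j+1)^{-(x_j+\delta_{ij})}$; substituting this back in and recognising each resulting integral as $K(\gamma,x+\delta_i,\alpha+1)$ yields \eqref{lemma-1-0}.

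For 2), the key observation is the elementary identity
\[
\frac{u}{\gamma_i}\,(u/\gamma_i+1)^{-x_i-1} = (u/\gamma_i+1)^{-x_i} - (u/\gamma_i+1)^{-x_i-1},
\]
obtained by writing $u/\gamma_i = (u/\gamma_i+1)-1$. The integrand of $K(\gamma,x+\delta_i,\alpha+1)$ is $u^{\alpha}(u/\gamma_i+1)^{-x_i-1}\prod_{j\neq i}(u/\gamma_j+1)^{-x_j}$; factoring $u^{\alpha}=u\cdot u^{\alpha-1}$ and applying the identity above we obtain
\[
K(\gamma,x+\delta_i,\alpha+1) = \gamma_i\Bigl[\, K(\gamma,x,\alpha) - K(\gamma,x+\delta_i,\alpha)\,\Bigr],
\]
which rearranges to \eqref{lemma-2-0}.

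Finally, 3) follows by combining the first two parts. From \eqref{lemma-2-0}, $K(\gamma,x+\delta_i,\alpha) = K(\gamma,x,\alpha) - \gamma_i^{-1}K(\gamma,x+\delta_i,\alpha+1)$, hence
\[
\sum_{i=1}^d b_i K(\gamma,x+\delta_i,\alpha)
= b_{\Cdot}\,K(\gamma,x,\alpha) - \sum_{i=1}^d \frac{b_i}{\gamma_i}\,K(\gamma,x+\delta_i,\alpha+1);
\]
substituting $b_{\Cdot}K(\gamma,x,\alpha) = (b_{\Cdot}/\alpha)\sum_i (x_i/\gamma_i)K(\gamma,x+\delta_i,\alpha+1)$ from \eqref{lemma-1-0} gives \eqref{lemma-3-0}. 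I expect no serious obstacle in any of this; the only genuine care required is the domain check above, which ensures that integration by parts is valid and that none of the $K$'s involved is a divergent integral.
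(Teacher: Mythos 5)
Your proposal is correct and follows essentially the same route as the paper: integration by parts for 1), the rewriting $u/\gamma_i=(u/\gamma_i+1)-1$ inside the integrand for 2), and an algebraic combination of 1) and 2) for 3). Your explicit verification that the boundary term vanishes and that all the $K$'s lie in the domain of definition is a welcome bit of bookkeeping that the paper leaves implicit.
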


\vspace{0.5cm}

\begin{proof}[Proof of Theorem \ref{maintheorem}]

Let
\begin{align*}
\frac{1}{t_i(\tau)} =& \frac{1}{r_i}(1-\tau) + \frac{1}{r_i+s_i} \tau ~~~ \mbox{for~~} \tau \in [0,1].
\end{align*}
Then,
\begin{align*}
t_i(\tau) =& r_i \frac{\displaystyle 1 + \frac{s_i}{r_i}}{\displaystyle 1 + \frac{s_i}{r_i}(1-\tau)}
\end{align*}
is a smooth monotonically increasing function of $\tau \in [0,1]$ satisfying $t_i(0) = r_i$ and $t_i(1) = r_i + s_i$.
Here, $\dot{t}_i/t_i = \gamma_i t_i$
since $\frac{\scriptsize \dd}{\scriptsize \dd \tau} \{1/t_i(\tau)\} = -\dot{t_i}/t_i^2 = -1/r_i + 1/(r_i + s_i) = -\gamma_i$.
We call $\tau$ the harmonic time because $\tau$ is the weight of the weighted harmonic mean $t_i(\tau)$ of $r_i$ and $r_i+s_i$.

By Lemma \ref{lemma-estimators},
the posterior mean of $\lambda$ with respect to $\pi_\beta$ is
\begin{align*}
\hat{\lambda}^{(\beta)}_i(z,\tau) =&\ \frac{z_i + \beta_i}{t_i(\tau)},
\end{align*}
and the posterior mean $\lambda$ with respect to $\pi_\beta^*$ is 
\begin{align*}
\hat{\lambda}_i^{(\beta*)}(z,\tau) =&\ \hat{\lambda}_i^{(\beta)}(z,\tau) f_i(\gamma t(\tau),z+\beta,\beta_{\Cdot}-1)
= \frac{z_i + \beta_i}{t_i(\tau)} f_i(\gamma t(\tau),z+\beta,\beta_{\Cdot}-1).
\end{align*}
Thus, from Lemma \ref{smallprediction}, it is sufficient to show that
\begin{align}
\sum_i \E \biggl[ &\ \dot{t}_i(\tau) \left\{ \hat{\lambda}_i^{(\beta)}(z(\tau),\tau)
- \hat{\lambda}_i^{(\beta*)}(z(\tau),\tau)
- \lambda_i \log \frac{\hat{\lambda}_i^{(\beta)}(z(\tau),\tau)}{\hat{\lambda}_i^{(\beta*)}(z(\tau),\tau)} \right\}
\, \bigg| \, \lambda \biggr] \notag \\
=& \sum_i \E \biggl[ \dot{t}_i(\tau) \frac{z_i(\tau) + \beta_i}{t_i(\tau)}
\Bigl\{ 1 - f_i(\gamma t(\tau), z(\tau)+\beta, \beta_{\Cdot}-1) \Bigr\} \notag \\
& ~~~~~~~~ + \frac{\dot{t}_i(\tau)}{t_i(\tau)} t_i(\tau) \lambda_i
\log f_i(\gamma t(\tau), z(\tau)+\beta, \beta_{\Cdot}-1) \,\bigg|\, \lambda \biggr]
\label{47-0}
\end{align}
is positive for every $\tau \in [0,1]$ and $\lambda$.
Define $\bar{f}_i(\gamma t, z-\delta_i+\beta, \alpha) = f_i(\gamma t, z-\delta_i+\beta, \alpha)$
if $z_i \geq 1$ and $\bar{f}_i(\gamma t, z-\delta_i+\beta, \alpha) = 1$
if $z_i = 0$.
Then, by Lemma \ref{lemma-koukan-0}, \eqref{47-0} is equal to
\begin{align}
\E \biggl[ & \sum_i \frac{\dot{t}_i(\tau)}{t_i(\tau)} (z_i(\tau) + \beta_i)
\Bigl\{ 1 - f_i(\gamma t(\tau), z(\tau)+\beta, \beta_{\Cdot}-1) \Bigr\} \notag \\
&
+ \sum_i \frac{\dot{t}_i(\tau)}{t_i(\tau)} z_i(\tau)
\log \bar{f}_i(\gamma t(\tau), z(\tau)-\delta_i+\beta, \beta_{\Cdot}-1) \, \bigg| \, \lambda \biggr]
\label{54-0}
\end{align}
since $z_i(\tau)$ is independently distributed according to the Poisson distribution with mean $t_i(\tau) \lambda_i$.
Note that \eqref{54-0} is the expectation of functions of $z(\tau)$ not depending on $\lambda$.

First, we evaluate the first term in the expectation in \eqref{54-0}.
By using \eqref{coeff} and \eqref{lemma-2-0},
\begin{align}
1- f_i&(\gamma t, z+\beta,\beta_{\Cdot}-1)
= 1 - \frac{K(\gamma t, z + \beta + \delta_i, \beta_{\Cdot}-1)}{K(\gamma t, z + \beta, \beta_{\Cdot}-1)} \notag \\
=&
1 - \frac{K(\gamma t, z + \beta, \beta_{\Cdot}-1) - \dfrac{1}{\gamma_i t_i} K(\gamma t, z + \beta + \delta_i, \beta_{\Cdot})}
{K(\gamma t, z + \beta, \beta_{\Cdot}-1)} \notag \\
=& \frac{K(\gamma t, z + \beta + \delta_i, \beta_{\Cdot})}{\gamma_i t_i K(\gamma t, z + \beta, \beta_{\Cdot}-1)}.
\label{fK-2}
\end{align}
From $\dot{t}_i/t_i = \gamma_i t_i$ and \eqref{fK-2}, we have
\begin{align}
\label{keisu}
\sum_i & \frac{\dot{t}_i}{t_i} (z_i + \beta_i) \{1-f_i(\gamma t, z+\beta, \beta_{\Cdot}-1)\}
= \frac{\sum_i (z_i + \beta_i) K(\gamma t, z+\beta+\delta_i, \beta_{\Cdot})}
{K(\gamma t, z+\beta, \beta_{\Cdot}-1)}.
\end{align}
If $z_{\Cdot} = 0$, then $z_1 = \cdots = z_d = 0$ and
\begin{align*}
\sum_i & \frac{\dot{t}_i}{t_i} (z_i + \beta_i) \{1-f_i(\gamma t, z+\beta, \beta_{\Cdot}-1)\}
= \frac{\sum_i \beta_i K(\gamma t, \beta+\delta_i, \beta_{\Cdot})}
{K(\gamma t, \beta, \beta_{\Cdot}-1)} > 0.
\end{align*}
If $z_{\Cdot} \geq 1$,
from \eqref{keisu}, \eqref{lemma-3-0}, and \eqref{lemma-1-0}, we have
\begin{align*}
\sum_i \frac{\dot{t}_i}{t_i} & (z_i + \beta_i) \{1-f_i(\gamma t, z+\beta, \beta_{\Cdot}-1)\} \\
=&
\frac{\sum_i \left\{ \dfrac{(z_{\Cdot} + \beta_{\Cdot})(z_i + \beta_i)}{\beta_{\Cdot} \gamma_i t_i}
 - \dfrac{z_i + \beta_i}{\gamma_i t_i} \right\}
K(\gamma t, z+\beta+\delta_i, \beta_{\Cdot}+1)}
{K(\gamma t, z+\beta, \beta_{\Cdot}-1)} \\
=& \frac{\dfrac{z_{\Cdot}}{\beta_{\Cdot}} \sum_i \dfrac{z_i + \beta_i}{\gamma_i t_i}
K(\gamma t, z+\beta+\delta_i, \beta_{\Cdot}+1)}
{K(\gamma t, z+\beta, \beta_{\Cdot}-1)} \\
=&
\dfrac{z_{\Cdot}}{\beta_{\Cdot}} 
\frac{\beta_{\Cdot} K(\gamma t, z+\beta, \beta_{\Cdot})}
{K(\gamma t, z+\beta, \beta_{\Cdot}-1)}
= z_{\Cdot}
\frac{K(\gamma t, z+\beta, \beta_{\Cdot})}
{K(\gamma t, z+\beta, \beta_{\Cdot}-1)}.
\end{align*}

Next, we evaluate the second term in the expectation in \eqref{54-0}.
We have
\begin{align*}
\frac{\dot{t}_i}{t} & z_i \log \bar{f}_i(\gamma t, z+\beta-\delta_i, \beta_{\Cdot}-1)
= - \gamma_i t_i z_i \log \left\{\frac{1}{\bar{f}_i(\gamma t, z + \beta -\delta_i, \beta_{\Cdot}-1)} - 1 + 1 \right\}.
\end{align*}
From \eqref{coeff} and \eqref{fK-2}, if $z_i \geq 1$,
\begin{align*}
\frac{1}{\bar{f}_i(\gamma t, z+\beta-\delta_i, \beta_{\Cdot}-1)} -1 
=
\frac{K(\gamma t, z+\beta, \beta_{\Cdot})}{\gamma_i t_i K(\gamma t, z+\beta, \beta_{\Cdot}-1)}.
\end{align*}
Thus, when $z_i \geq 1$,
\begin{align*}
\frac{\dot{t}_i}{t} z_i \log \bar{f}_i(\gamma t, z+\beta-\delta_i, \beta_{\Cdot}-1)
=& - \gamma_i t_i z_i
\log \left\{\frac{K(\gamma t, z +\beta, \beta_{\Cdot})}{\gamma_i t_i K(\gamma t, z +\beta, \beta_{\Cdot}-1)} +1 \right\} \\
>&
- z_i \frac{K(\gamma t, z + \beta, \beta_{\Cdot})}{K(\gamma t, z+\beta, \beta_{\Cdot}-1)}.
\end{align*}
When $z_i = 0$, the equality
\begin{align*}
\frac{\dot{t}_i}{t} & z_i \log \bar{f}_i(\gamma t, z+\beta-\delta_i, \beta_{\Cdot}-1)
= - z_i \frac{K(\gamma t, z + \beta, \beta_{\Cdot})}{K(\gamma t, z+\beta, \beta_{\Cdot}-1)} = 0
\end{align*}
obviously holds.
Thus, for every $z$,
\begin{align*}
\sum_i \frac{\dot{t}_i}{t} & z_i \log \bar{f}_i(\gamma t, z+\beta-\delta_i, \beta_{\Cdot}-1)
\geq
- z_{\Cdot} \frac{K(\gamma t, z + \beta, \beta_{\Cdot})}{K(\gamma t, z+\beta, \beta_{\Cdot}-1)}.
\end{align*}
The inequality is strict if $z_{\Cdot} \geq 1$.

Hence, for every $z \in \mathbb{N}_0^d$,
\begin{align*}
\sum_i & \frac{\dot{t}_i}{t_i} (z_i + \beta_i) \{1-f_i(\gamma t, z+\beta, \beta_{\Cdot}-1)\}
+ \sum_i \frac{\dot{t}_i}{t} z_i \log \bar{f}_i(\gamma t, z+\beta-\delta_i, \beta_{\Cdot}-1)
> 0
\end{align*}

Therefore, \eqref{54-0} is greater than $0$ for every $\tau \in [0,1]$ and $\lambda$. 
Thus, we have proved the desired result.
\end{proof}

\section{Relative invariance of the prior along with the harmonic time $\tau$}

In this section, $\pi^*_{\beta}$ in Theorem 1 is denoted by $\pi^*_{\beta,r,s}$
to indicate its dependence on $r = (r_1,\ldots,r_d)$ and $s = (s_1,\ldots,s_d)$ explicitly.
The prior $\pi^*_{\beta,r,s}$ depends on $r$ and $s$ through
$(1/r_1 - 1/(r_1+s_1), \ldots, 1/r_d - 1/(r_d+s_d))$
because
$\pi^*_{\beta,r,s} = \pi_{\alpha,\beta,\gamma}$ with $\alpha = \beta_{\Cdot}$ and $\gamma_i = 1/r_i - 1/(r_i+s_i)$.
If there exists a constant $c>0$ such that
\[ 
\frac{1}{r_i'} - \frac{1}{r_i'+s_i'}
= c \left(\frac{1}{r_i} - \frac{1}{r_i+s_i}\right)
\]
for $i=1,\ldots,d$,
then $\pi^*_{\beta,r,s}$ is proportional $\pi^*_{\beta,r',s'}$
because $\pi_{\alpha,\beta,c\gamma} \propto \pi_{\alpha,\beta,\gamma}$.

Consider the harmonic time $\tau \in (-\infty, \min_i (r_i/s_i) + 1)$ satisfying
\begin{align*}
\frac{1}{t_i(\tau)}
=& \frac{1}{r_i}(1-\tau) + \frac{1}{r_i+s_i} \tau.
\end{align*}
The discussions in previous sections are essentially valid if the time interval $[0,1]$
is extended to $(-\infty, \min_i (r_i/s_i) + 1)$.
Suppose that we observe $z(a)$, where $a \in (-\infty, \min_i (r_i/s_i) + 1)$,
and predict $z(b) - z(a)$, where $b \in (a, \min_i (r_i/s_i) + 1)$.
Since
\begin{align*}
\frac{1}{t_i(a)} - \frac{1}{t_i(b)}
=& \left\{\frac{1}{r_i}(1-a) + \frac{1}{r_i+s_i}a \right\}
- \left\{\frac{1}{r_i}(1-b) + \frac{1}{r_i+s_i}b \right\} \\
=& (b - a) \left( \frac{1}{r_i} - \frac{1}{r_i+s_i} \right),
\end{align*}
the prior $\pi^*_{\beta,r/(b-a),s/(b-a)}$ for this prediction problem
is proportional to the prior $\pi^*_{\beta,r,s}$ for the original prediction problem
in which we observe $z(0)$ and predict $z(1) - z(0)$.
In this sense, the prior constructed by Theorem 1 is relatively invariant along with the harmonic time $\tau$.
This relative invariance corresponds to the fact that
the estimators $\hat{\lambda}^{(\beta *)}_i (\cdot,\tau)$ based on $\pi^*_{\beta,r,s}$
is superior in the risk \eqref{hyouka-3} for all $\tau$
and is one reason why the harmonic time $\tau$
is useful to investigate the original prediction problem.

Next, we discuss the relation between the results in previous sections and the asymptotic theory \citep{KBA2014}
for general models when $x(i)$ $(i=1,\ldots,N)$ and $y$ have different distributions
$p(x \mid \theta)$ and $p(y \mid \theta)$ with the same parameter $\theta$.
The predictive metric $\gxy{ij}{}$ is defined by
$\sum_{k,l} g_{ik} \tilde{g}^{kl} g_{jl}$, where $(g_{ij})$ and $(\tilde{g}_{ij})$ are the Fisher information matrices
for $p(x \mid \theta)$ and $p(y \mid \theta)$, respectively, and
the $d \times d$ matrix $(\gy{}{ij})$ is the inverse matrix of $(\gy{ij}{})$.
In the asymptotic theory,
the predictive metric $\gxy{ij}{}$
and the volume element $|\gxy{}{}|^{1/2} \dd \theta^1 \cdots \dd \theta^d$
of it correspond to the Fisher--Rao metric and the Jeffreys prior, respectively, in the conventional setting.

In the prediction problem for independent time-inhomogeneous Poisson processes with the harmonic time $\tau$,
the Fisher information matrix $(\gx{ij}{})$ for $p(z(\tau) \mid \lambda)$
and the Fisher information matrix $(\gy{ij}{})$ for $p(z_\Delta(\tau) \mid \lambda)$
are given by
\begin{align*}
\gx{ij}{}(\lambda;\tau) =
\left\{
\begin{array}{cc}
\displaystyle \frac{t_i(\tau)}{\lambda_i} & (i=j) \\[0.5cm]
0 & (i \neq j)
\end{array}
\right. 
\end{align*}
and
\begin{align*}
\gy{ij}{}(\lambda;\tau) =
\left\{
\begin{array}{cc}
\displaystyle \frac{t_i(\tau+\Delta) - t_i(\tau)}{\lambda_i} & (i=j) \\[0.5cm]
0 & (i \neq j)
\end{array}
\right., 
\end{align*}
respectively.
When $\Delta$ is small,
$\gy{ii}{}(\lambda;\tau) = \dot{t}_i(\tau) \Delta / \lambda_i + \mathrm{o}(\Delta)$.
We define the infinitesimal predictive metric by
\begin{align}
\label{poissonpm}
\gxy{ij}{}(\lambda;\tau) := \lim_{\Delta \rightarrow 0} \Delta \sum_{k,l} \gx{ik}{}\gy{}{ij}\gx{jl}{}
= \left\{
\begin{array}{cc}
\displaystyle \frac{\{t_i(\tau)\}^2}{\dot{t}_i(\tau) \lambda_i}
= \frac{r_i(r_i+s_i)}{\lambda_i} & (i=j) \\[0.5cm]
0 & (i \neq j)
\end{array}
\right.,
\end{align}
which is the limit of the predictive metric as $\Delta \rightarrow 0$.
The last equality in \eqref{poissonpm} is because the relations
$\dot{t_i}^2(\tau)/t_i(\tau) = r_i(r_i+s_i)$ $(i=1,\ldots,d)$
holds for the harmonic time $\tau$.
The volume element prior based on $\gxy{ij}{}(\lambda;\tau)$ is defined by
$\pi_\text{P}(\lambda;\tau) = |\gxy{ij}{}(\lambda;\tau)|^{1/2}$
and is proportional to the Jeffreys prior $\pi_\text{J}(\lambda) \propto \prod_i {\lambda_i}^{-1/2}$.
Thus, when the harmonic time $\tau$ is adopted, the infinitesimal predictive metric
and the volume element prior based on it do not depend on $\tau$.
Intuitively speaking, the geometrical structures of infinitesimal prediction are identical for all $\tau$.
Hence, there exists a prior superior for infinitesimal predictions for all $\tau$
and the prior is also superior for the original prediction problem.
More specifically, the ratio $\pi^*_{\beta,r,s}(\lambda)/\pi_\text{P}(\lambda;\tau)$ does not depend on $\tau$
and is a nonconstant positive superharmonic function with respect to the predictive metric $\gxy{ij}{}(\lambda;\tau)$
for every $\tau$, see \cite{KBA2014} for details.
This property of the harmonic time $\tau$ is closely related to
the relative invariance of the prior $\pi^*_{\beta,r,s}$ along with $\tau$.

\newpage
\appendix
\def\thesection{Appendix.}

\section{Proofs of Lemmas}
\begin{proof}[Proof of Lemma \ref{smallprediction}]

\noindent
1)~
First, we prove \eqref{hyouka-2}.
We have
\begin{align*}
\E\Bigl[ & D\{p(z_\Delta(\tau) \mid \lambda), p_\pi(z_\Delta(\tau) \mid z(\tau))\} \; \Big| \; \lambda \Bigr] =
\sum_{z(\tau), z_\Delta(\tau)} p (z(\tau), z_\Delta(\tau) \mid \lambda)
\log \frac{p(z_\Delta(\tau) \mid \lambda)}{p_{\pi} (z_\Delta(\tau) \mid z(\tau))} \notag \\
=& \sum_{z(\tau), z_\Delta(\tau)} p(z(\tau), z_\Delta(\tau) \mid \lambda) \log p(z(\tau), z_\Delta(\tau) \mid \lambda)
 - \sum_{z(\tau)} p(z(\tau) \mid \lambda) \log p (z(\tau) \mid \lambda) \notag \\
& - \sum_{z(\tau), z_\Delta(\tau)} p (z(\tau), z_\Delta(\tau) \mid \lambda) \log p_{\pi} (z(\tau), z_\Delta(\tau)) 
+ \sum_{z(\tau)} p(z(\tau) \mid \lambda) \log p_{\pi} (z(\tau)).
\end{align*}
The conditional density $p(z(\tau) \mid z(\tau+\Delta), \lambda)$ does not depend on $\lambda$
because of the sufficiency of $z(\tau+\Delta) = z(\tau) + z_\Delta(\tau)$.
Thus,
\begin{align}
\E\Bigl[ & D\{p(z_\Delta(\tau) \mid \lambda), p_\pi(z_\Delta(\tau) \mid z(\tau))\} \; \Big| \; \lambda \Bigr] \notag \\
= & \sum_{z(\tau), z(\tau+\Delta)} p(z(\tau), z(\tau+\Delta) \mid \lambda)
\log \{p (z(\tau+\Delta) \mid \lambda) p(z(\tau) \mid z(\tau+\Delta)) \} \notag \\
& - \sum_{z(\tau)} p(z(\tau) \mid \lambda) \log p (z(\tau) \mid \lambda) \notag \\
& - \sum_{z(\tau), z(\tau+\Delta)} p (z(\tau), z(\tau+\Delta) \mid \lambda) \log \{p_{\pi} (z(\tau+\Delta))
p(z(\tau) \mid z(\tau+\Delta)) \} \notag \\
& + \sum_{z(\tau)} p(z(\tau) \mid \lambda) \log p_{\pi} (z(\tau)) \notag \\
= & \sum_{z(\tau+\Delta)} p(z(\tau+\Delta) \mid \lambda) \log p (z(\tau+\Delta) \mid \lambda)
 - \sum_{z(\tau)} p(z(\tau) \mid \lambda) \log p (z(\tau) \mid \lambda) \notag \\
& - \sum_{z(\tau+\Delta)} p (z(\tau+\Delta) \mid \lambda) \log p_{\pi} (z(\tau+\Delta))
+ \sum_{z(\tau)} p(z(\tau) \mid \lambda) \log p_{\pi} (z(\tau)).
\label{eq:diff}
\end{align}
Therefore, we have
\begin{align}
\frac{\partial}{\partial \Delta} & 
\E\Bigl[ D\{p(z_\Delta(\tau) \mid \lambda), p_\pi(z_\Delta(\tau) \mid z(\tau))\} \; \Big| \; \lambda \Bigr]
\bigg|_{\Delta = 0} \notag \\
= &
\frac{\partial}{\partial \tau} \sum_z p (z(\tau)\mid\lambda) \log \frac{p(z(\tau) \mid \lambda)}{p_\pi (z(\tau))}
= \frac{\partial}{\partial \tau} D \{ p(z(\tau) \mid \lambda), p_\pi(z(\tau)) \}
\label{18-1}
\end{align}
because
$\E\Bigl[ D\{p(z_\Delta(\tau) \mid \lambda), p_\pi(z_\Delta(\tau) \mid z(\tau))\} \; \Big| \; \lambda \Bigr] = 0$
when $\Delta = 0$.

Next, we prove \eqref{hyouka-3}.
We have
\begin{align}
\frac{\partial }{\partial  \tau} & p (z(\tau) \mid \lambda)
= \frac{\dd}{\dd \tau} \prod^d_{i=1} \frac{ \{ t_i(\tau) \lambda_i \}^{z_i}} {z_i!} \ee^{ - t_i (\tau) \lambda_i}
\notag \\
= & \sum^d_{j=1} \left[ \prod^d_{i=1} z_j \frac{ \{ t_i(\tau) \lambda_i \}^{z_i - \delta_{ij}} }{z_i!}
  \dot{t}_j (\tau) \lambda_j \ee^{ -t_i (\tau) \lambda_i}
- \prod^d_{i=1} \frac{ \{ t_i(\tau) \lambda_i \} ^{z_i} } {z_i!} 
  \dot{t}_j (\tau) \lambda_j \ee^{ -t_i (\tau) \lambda_i} \right]
\notag  \\
= & \sum^d_{j=1} \left[ \prod^d_{i=1} z_j \frac{\dot{t}_j(\tau)}{t_j(\tau)} \frac{ \{ t_i(\tau) \lambda_i \}^{z_i}}{z_i!}
     \ee^{ -t_i (\tau) \lambda_i }
- \prod^d_{i=1} \frac{\dot{t}_j(\tau)}{t_j(\tau)} t_j(\tau) \lambda_j \frac{ \{ t_i ( \tau) \lambda_i \}^{z_i}}{z_i!}
\ee^{-t_i(\tau) \lambda_i}\right]
\notag  \\
= & \sum^d_{j=1} \frac{\dot{t}_j(\tau)}{t_j(\tau)} \{z_j - t_j(\tau) \lambda_j\} p(z(\tau) \mid \lambda).
\label{eq-bibun}
\end{align}
Similarly,
\begin{align}
\frac{\partial }{\partial \tau} p_\pi (z (\tau))
= & \sum^d_{j=1} \frac{\dot{t}_j(\tau)}{t_j(\tau)} \{z_j - t_j(\tau) \hat{\lambda}^\pi_j(z,\tau)\}
p_\pi (z (\tau)).
\label{eq-bibundouyou}
\end{align}
From Lemma \ref{lemma-koukan-0},
\begin{align}
\sum_z & \sum^{d}_{j=1} \{z_j - t_j (\tau) \lambda_j \} p (z(\tau) \mid \lambda) \log p_{\pi} (z(\tau)) \notag \\
= & \sum_z \sum^{d}_{j=1} t_j (\tau) \lambda_j p ( z (\tau) \mid \lambda )
\log \frac { p_\pi ( z (\tau) + \delta_j ) }{ p_\pi ( z (\tau) ) }.
\label{13-A}
\end{align}
Since
\begin{align*}
p_\pi (z (\tau) + \delta_j)
=& \int \prod^{d}_{i=1} \frac{ \{ t_i (\tau) \lambda_i \}^{z_i + \delta_{ij}} }{(z_i + \delta_{ij}) !}
    \ee^{ - t_i \lambda_i } \pi ( \lambda) \dd \lambda \\
=& \int \frac{ t_j ( \tau) \lambda_j}{z_j + 1} \prod^{\dd}_{i=1} 
    \frac{ \{t_i(\tau) \lambda_i \}^{z_i} }{z_i!} \ee^{-t_i(\tau) \lambda_i} \pi (\lambda) \dd \lambda,
\end{align*}
we have
\begin{align}
\frac{p_\pi (z(\tau) + \delta_j )}{ p_\pi (z(\tau))} = \frac{t_j(\tau) \hat{\lambda}^\pi_j(z,\tau)}{z_j + 1}.
\label{13-B}
\end{align}
From \eqref{eq-bibun}, \eqref{eq-bibundouyou}, \eqref{13-A}, \eqref{13-B}, and Lemma \ref{lemma-koukan-0},
\begin{align*}
\frac{\partial}{\partial \tau} & \sum_z p (z(\tau)\mid\lambda) \log p_\pi (z(\tau)) \notag \\
=& \sum_z \left\{\frac{\partial}{\partial \tau} p(z(\tau) \mid \lambda) \right\} \log p_\pi (z(\tau)) +
 \sum_z p(z(\tau)\mid\lambda) \frac{\frac{\partial}{\partial \tau} p_\pi (z(\tau))}{p_\pi (z(\tau))} \notag \\
=& \sum_z \sum^d_{j=1} \frac{\dot{t}_j(\tau)}{t_j(\tau)} 
t_j (\tau) \lambda_j p ( z (\tau) \mid \lambda )
\log \frac { p_\pi ( z (\tau) + \delta_j ) }{ p_\pi ( z (\tau) ) } \notag \\
&+ \sum_z \sum^d_{j=1} p(z(\tau)\mid\lambda) \frac{\dot{t}_j(\tau)}{t_j(\tau)}
\{z_j - t_j(\tau) \hat{\lambda}^\pi_j(z,\tau)\} \notag \\
=& \sum_z \sum^d_{j=1} p ( z (\tau) \mid \lambda ) \dot{t}_j(\tau)
\lambda_j \log \frac{t_j (\tau) \hat{\lambda}^\pi_j (z,\tau)}{z_j + 1}
+ \sum_z \sum^d_{j=1} p(z(\tau)\mid\lambda) \dot{t}_j(\tau) \{\lambda_j - \hat{\lambda}^\pi_j(z,\tau)\}.
\end{align*}
Similarly we have,
\begin{align}
\frac{\partial}{\partial \tau} & \sum_z p (z(\tau)\mid\lambda) \log p (z(\tau) \mid \lambda)
= \sum_z \sum^d_{j=1} p ( z (\tau) \mid \lambda ) \dot{t}_j(\tau)
\lambda_j \log \frac{t_j (\tau) \lambda_j}{z_j + 1}. \notag
\end{align}
Thus,
\begin{align*}
\frac{\partial}{\partial \tau} & \sum_z p (z(\tau)\mid\lambda) \log \frac{p(z(\tau) \mid \lambda)}{p_\pi (z(\tau))} \notag \\
=& \sum_z p ( z (\tau) \mid \lambda )
\sum^d_{j=1} \dot{t}_j(\tau) \lambda_j 
\left\{
\frac{\hat{\lambda}_j^\pi(z,\tau)}{\lambda_j} - 1 -\log \frac{\hat{\lambda}^\pi_j(z,\tau)}{\lambda_j}
\right\}.
\end{align*}

\noindent
2)~
From \eqref{eq:diff}, we have
\begin{align*}
\E \bigl[ & D(p(y \mid \lambda), p_{\pi'} ( y \mid x)) \, \big| \, \lambda \bigr]
- \E \bigl[ D (p(y \mid \lambda), p_{\pi} (y \mid x)) \, \big| \, \lambda \bigr] \notag \\
=& \E \Bigl[ D \bigl\{ p(z_{\Delta=1}(0) \mid \lambda), p_{\pi'} (z_{\Delta=1}(0) \mid z(0)) \bigr\} \, \Big| \, \lambda \Bigr] \notag \\
& - \E \Bigl[ D \bigl\{ p(z_{\Delta=1}(0) \mid \lambda), p_{\pi} (z_{\Delta=1}(0) \mid z(0)) \bigr\} \, \Big| \, \lambda \Bigr] \notag \\
= & \int^1_0 \frac{\partial}{\partial \tau} \sum_{z(\tau)} p(z(\tau) \mid \lambda ) \log p_\pi (z(\tau)) \dd \tau
- \int^1_0 \frac{\partial}{\partial \tau} \sum_{z(\tau)} p(z(\tau) \mid \lambda) \log p_{\pi'} (z(\tau)) \dd \tau.
\end{align*}
Thus, we obtain the desired results \eqref{lemma1-2-1} and \eqref{lemma1-2-2}
from \eqref{hyouka-2} and \eqref{hyouka-3}, respectively.
\end{proof}
\vspace{0.5cm}

\begin{proof}[Proof of Lemma \ref{lemma-p_abc}]  

\noindent
1)~
Let $z_i = z_i(\tau)$ and $z'_i = (z_\Delta)_i(\tau)$.
Then, we have
\begin{align*}
\int p(z &\mid \lambda) \pi_\beta(\lambda) \dd \lambda
= \int \prod^d_{i=1} \frac{\{t_i(\tau)\}^{z_i} \lambda_i^{z_i + \beta_i - 1}}{z_i!}
\ee^{-t_i(\tau) \lambda_i} \dd \lambda_1 \dotsb \dd \lambda_d \\
=& \prod^d_{i=1} \frac{\{t_i(\tau)\}^{z_i} \Gamma(z_i + \beta_i)}
{z_i! t_i(\tau)^{z_i + \beta_i}}
\end{align*}
and
\begin{align*}
\int p(z,&\ z'\mid \lambda) \pi_\beta(\lambda) \dd \lambda \\
=& \int \prod^d_{i=1} \frac{\{t_i(\tau)\}^{z_i}
\{t_i(\tau+\Delta) - t_i(\tau)\}^{z'_i}
\lambda_i^{z_i + z'_i  + \beta_i - 1}}{z_i! z'_i!}
\ee^{- t_i(\tau+\Delta) \lambda_i}
\dd \lambda_1 \dotsb \dd \lambda_d \\
=& \prod^d_{i=1} 
 \frac{\{t_i(\tau)\}^{z_i} \{t_i(\tau+\Delta) - t_i(\tau)\}^{z'_i}
 \Gamma(z_i + z'_i + \beta_i)}
{z_i! z'_i! \{t_i(\tau+\Delta)\}^{z_i + z'_i + \beta_i}}.
\end{align*}
From $p_\beta(z' \mid z) = p_\beta(z, z')/p_\beta(z)$, we have the desired result.

\noindent
2)~ 
If $\gamma_i > 0$ $(i=1,\ldots,d)$ and $\alpha > 0$,
\begin{align*} 
\int_0^\infty u^{\alpha-1} \exp \left(- u \sum_{i=1}^d \frac{\lambda_i}{\gamma_i} \right) \dd u
= \frac{\Gamma (\alpha)}{(\sum_{i=1}^d \frac{\lambda_i}{\gamma_i})^\alpha}.
\end{align*}
Thus,
\begin{align}
 \pi_{\alpha,\beta,\gamma}(\lambda) =& \frac{\prod_{i=1}^d \lambda_i^{\beta_i-1}}{\Gamma (\alpha)}
\int_0^\infty u^{\alpha-1} \exp \left(- u \sum_{j=1}^d \frac{\lambda_j}{\gamma_j} \right) \dd u.
\label{priorintegral} 
\end{align}
Therefore, since
\begin{align*}
\Gamma(\alpha) & p_{\alpha,\beta,\gamma}(z)
= \Gamma(\alpha) \int p(z \mid \lambda) \pi_{\alpha,\beta,\gamma}(\lambda) \dd \lambda \\
=& \int \prod^d_{i=1} \frac{\{t_i(\tau)\}^{z_i} \lambda_i^{z_i +\beta_i-1}}{z_i!} \ee^{-t_i(\tau) \lambda_i}
\int^\infty_0 u^{\alpha-1} \exp \biggl( - u \sum_j \frac{\lambda_j}{\gamma_j} \biggr) \dd u
\dd \lambda_1 \dotsb \dd \lambda_d \\
=& \int_0^\infty u^{\alpha-1} \int \prod^d_{i=1} \frac{\{t_i(\tau)\}^{z_i} \lambda_i^{z_i + \beta_i - 1}}{z_i!}
\ee^{-\{\frac{u}{\gamma_i} + t_i(\tau)\}\lambda_i} \dd \lambda_1 \dotsb \dd \lambda_d \dd u \\
=& \int_0^\infty u^{\alpha-1} \prod^d_{i=1} \frac{\{t_i(\tau)\}^{z_i} \Gamma(z_i + \beta_i)}
{z_i! \{\frac{u}{\gamma_i} + t_i(\tau)\}^{z_i + \beta_i}} \dd u \\
=& \left[ \prod^d_{i=1} \frac{\{t_i(\tau)\}^{z_i} \Gamma (z_i + \beta_i)}{z_i! \{t_i(\tau)\}^{z_j + \beta_i}} \right]
\int_0^\infty u^{\alpha-1}
\prod^d_{j=1} \frac{1}{\{\frac{u}{t_i(\tau) \gamma_i} + 1\}^{z_j + \beta_i}} \dd u
\end{align*}
and
\begin{align*}
\Gamma(\alpha) & p_{\alpha,\beta,\gamma}(z,z')
= \Gamma(\alpha) \int p(z,z' \mid \lambda) \pi_{\alpha,\beta,\gamma}(\lambda) \dd \lambda \\
=& \int \prod^d_{i=1} \frac{
\{t_i(\tau)\}^{z_i}
\{t_i(\tau+\Delta) - t_i(\tau)\}^{z'_i}
\lambda_i^{z_i + z_i' + \beta_i - 1}}{z_i!z'_i!} \ee^{-t_i(\tau + \Delta) \lambda_i} \\
& \times \int^\infty_0 u^{\alpha-1} \exp \biggl(- u \sum_j \frac{\lambda_j}{\gamma_j} \biggr) \dd u
\dd \lambda_1 \dotsb \dd \lambda_d \\
=& \int_0^\infty u^{\alpha-1} \int \prod^d_{i=1} \frac{
\{t_i(\tau)\}^{z_i}
\{t_i(\tau+\Delta) - t_i(\tau)\}^{z'_i}
\lambda_i^{z_i + z'_i  + \beta_i - 1}}{z_i! z'_i!}
\ee^{- \{\frac{u}{\gamma_i} + t_i(\tau+\Delta)\}\lambda_i}
\dd \lambda_1 \dotsb \dd \lambda_d \dd u \\
=& \int_0^\infty u^{\alpha-1} \prod^d_{i=1} 
 \frac{\{t_i(\tau)\}^{z_i} \{t_i(\tau+\Delta) - t_i(\tau)\}^{z'_i}
 \Gamma(z_i + z'_i + \beta_i)}
{z_i! z'_i! \{\frac{u}{\gamma_i} + t_i(\tau+\Delta)\}^{z_i + z'_i + \beta_i}} \dd u \\
=& \left[
\prod^d_{i=1}
\frac{\{t_i(\tau)\}^{z_i} \{t_i(\tau+\Delta) - t_i(\tau)\}^{z'_i}
\Gamma (z_i + z'_i + \beta_i)}{z_i! z'_i! \{t_i(\tau+\Delta)\}^{z_i + z'_i + \beta_i}} \right]
\int_0^\infty u^{\alpha-1} \prod^d_{j=1}
\frac{1}{\{\frac{u}{t_j(\tau+\Delta) \gamma_j} + 1\}^{z_j + z'_j + \beta_j}} \dd u,
\end{align*}
we obtain the desired result from
$p_{\alpha,\beta,\gamma} (z' \mid z) = p_{\alpha,\beta,\gamma}(z,z')/p_{\alpha,\beta,\gamma}(z)$.
\end{proof}

\vspace{0.5cm}

\begin{proof}[Proof of Lemma \ref{lemma-estimators}]
1)~
The posterior mean of $\lambda_i$ with respect to $\pi_\beta$ is given by
\begin{align*}
\hat{\lambda}_i^{(\beta)} &:= \frac{\int \lambda_i p(z(\tau) \mid \lambda) \pi_\beta(\lambda) \dd \lambda}
{\int p(z(\tau) \mid \lambda) \pi_\beta(\lambda) \dd \lambda}
= \frac{\displaystyle \int \lambda_i \prod^d_{j=1} \frac{\lambda_j^{z_j + \beta_i - 1}}{z_j!}
\ee^{-t_i(\tau) \lambda_j} \dd \lambda_1 \dotsb \dd \lambda_d}
{\displaystyle \int \prod^d_{k=1} \frac{\lambda_k^{z_k + \beta_k - 1}}{z_k!}
\ee^{-t_k(\tau) \lambda_k} \dd \lambda_1 \dotsb \dd \lambda_d} \\
&= \frac{\displaystyle \frac{\Gamma \left(z_i + \beta_i + 1 \right)}{t_i(\tau)^{z_i + \beta_i + 1}}
\prod_{j \neq i} \frac{\Gamma \left(z_j + \beta_i \right)}{t_k(\tau)^{z_k + \beta_i}}}
{\displaystyle \prod^d_{k=1} \frac{\Gamma(z_k + \beta_k)}{t_k(\tau)^{z_k + \beta_k}}}
= \frac{z_i + \beta_i}{t_i(\tau)}.
\end{align*}

\noindent
2)~
By using \eqref{priorintegral}, we have
\begin{align*}
\Gamma(\alpha) & \int p(z(\tau) \mid \lambda) \pi_{\alpha,\beta,\gamma}(\lambda) \dd \lambda \\
=& \int \prod^d_{i=1} \frac{\lambda_i^{z_i + \beta_i - 1}}{z_i!} \ee^{-t_i(\tau) \lambda_i}
\int^\infty_0 u^{\alpha-1} \exp 
\biggl(- u \sum_j \frac{\lambda_j}{\gamma_j} \biggr) \dd u
\dd \lambda_1 \dotsb \dd \lambda_d \\
=& \int_0^\infty u^{\alpha-1} \int \prod^d_{i=1} \frac{\lambda_i^{z_i + \beta_i - 1}}{z_i!}
\ee^{- \{t_i(\tau) + \frac{u}{\gamma_i}\}\lambda_i} 
\dd \lambda_1 \dotsb \dd \lambda_d \dd u \\
=& \left\{\prod^d_{i=1} \frac{\Gamma (z_i + \beta_i)}{z_i!} \right\} \int_0^\infty u^{\alpha-1}
\prod^d_{j=1} \frac{1}{\left\{t_j(\tau) + \frac{u}{\gamma_j}\right\}^{z_j + \beta_j}} \dd u,
\end{align*}
and
\begin{align*}
\Gamma(\alpha) & \int \lambda_i p(z(\tau) \mid \lambda) \pi_{\alpha,\beta,\gamma}(\lambda) \dd \lambda \\
=& \int \lambda_i \left( \prod^d_{j=1} \frac{\lambda_j^{z_j + \beta_j - 1}}{z_j!} \ee^{-t_j(\tau) \lambda_j} \right)
\int^\infty_0 u^{\alpha-1} \exp \biggl(- u \sum_k \frac{\lambda_k}{\gamma_k} \biggr) \dd u
\dd \lambda_1 \dotsb \dd \lambda_d \\
= & \frac{\Gamma \left(z_i + \beta_i + 1 \right)}{{z_i!}}
\left\{\prod_{j \neq i} \frac{\Gamma \left(z_j + \beta_j \right)}{{z_j!}} \right\} \\
& \times \int_0^\infty u^{\alpha-1}
\frac{1}{\left\{ t_i(\tau) + \frac{u}{\gamma_i} \right\}^{z_i + \beta_i + 1}}
\left[ \prod_{k \neq i} \frac{1}{\left\{ t_k(\tau) + \frac{u}{\gamma_k} \right\}^{z_k + \beta_k}} \right]
\dd u.
\end{align*}
Thus, the posterior mean of $\lambda$ with respect to $\pi_{\alpha,\beta,\gamma}$ is given by
\begin{align*}
\hat{\lambda}_i^{(\alpha,\beta,\gamma)} :=&
\frac{\displaystyle \int_0^\infty \lambda_i p(z(\tau) \mid \lambda) \pi_{\alpha,\beta,\gamma}(\lambda) \dd \lambda}
{\displaystyle \int_0^\infty p(z(\tau) \mid \lambda) \pi_{\alpha,\beta,\gamma}(\lambda) \dd \lambda}
= \frac{z_i + \beta_i}{t_i(\tau)}
\frac{\displaystyle
 \int_0^\infty \displaystyle
 u^{\alpha-1} \prod^d_{j=1} \dfrac{1}{\left\{\frac{u}{t_j(\tau) \gamma_j} + 1 \right\}^{z_j + \beta_j +\delta_{ij}}} \dd u}
{\displaystyle
 \int_0^\infty \displaystyle
u^{\alpha-1} \prod^d_{j=1} \dfrac{1}{\left\{\frac{u}{t_j(\tau) \gamma_j} + 1 \right\}^{z_j + \beta_j}} \dd u}.
\end{align*}
\end{proof}

\vspace{0.5cm}
\begin{proof}[Proof of Lemma \ref{lemma-koukan-0}]
We have
\begin{align*}
\E[x_i h(x) \mid \lambda] =&
\sum_x \prod^d_{j=1} \frac{\lambda_j^{x_j}}{x_j!} \ee^{-\lambda_j} x_i h (x)
= \sum_x \prod^d_{j=1} \lambda_i \frac{\lambda ^{x_j}_j}{x_j!} \ee^{-\lambda_j} h(x + \delta_i) \\
=& \E[\lambda_i h(x + \delta_i) \mid \lambda].
\end{align*}
\end{proof}

\begin{proof}[Proof of Lemma \ref{lemma-Ksekibun}]
1)~ By partial integration,
\begin{align*}
K (\gamma,&\ x, \alpha) = \int^\infty_0 u^{\alpha-1} \prod^d_{i=1} \frac{1}{(u/\gamma_i + 1)^{x_i}} \dd u \\
=& \left[ \frac{u^\alpha}{\alpha} \prod^d_{i=1} \frac{1}{(u/\gamma_i + 1)^{x_i}} \right]^\infty_0
+ \int^\infty_0 \frac{u^\alpha}{\alpha} \sum^d_{i=1} \left\{\prod_{j \neq i} \frac{1}{(u/\gamma_j + 1)^{x_j}} \right\}
\frac{x_i/\gamma_i}{(u/\gamma_i + 1)^{x_i +1}} \dd u \\
=& \frac{1}{\alpha} \sum_i \frac{x_i}{\gamma_i} K(\gamma, x + \delta_i, \alpha+1).
\end{align*}

\noindent
2)~
We have
\begin{align*}
K (\gamma, x + \delta_i, \alpha+1)
= & \int_0^\infty u^\alpha \Bigl\{\prod_j \frac{1}{(u/\gamma_j + 1)^{x_j}} \Bigr\} \frac{1}{u/\gamma_i + 1} \dd u \\
= & \int_0^\infty u^{\alpha-1} \Bigl\{\prod_j \frac{1}{(u/\gamma_j + 1)^{x_j}} \Bigr\} \frac{1}{u/\gamma_i + 1}
\gamma_i (u/\gamma_i + 1 - 1) \dd u \\
= & \gamma_i K (\gamma, x, \alpha) - \gamma_i K(\gamma, x + \delta_i, \alpha).
\end{align*}

\noindent
3)~
From \eqref{lemma-2-0}, we have
\begin{align*}
\sum_i b_i K (\gamma, x+ \delta_i, \alpha)
=& \sum_i b_i \left\{ K (\gamma, x, \alpha) - \frac{1}{\gamma_i} K (\gamma, x + \delta_i, \alpha+1) \right\}
 \notag \\
= & \frac{b_{\Cdot}}{\alpha} \alpha K(\gamma, x, \alpha) - \sum_i \frac{b_i}{\gamma_i} K (\gamma, x + \delta_i, \alpha+1).
\end{align*}
By using \eqref{lemma-1-0},
\begin{align*}
\sum_i b_i K (\gamma, x+ \delta_i, \alpha)
=&
\frac{b_{\Cdot}}{\alpha} \sum_i \frac{x_i}{\gamma_i} K (\gamma, x + \delta_i, \alpha+1)
- \sum_i \frac{b_i}{\gamma_i} K (\gamma, x + \delta_i, \alpha+1) \\
= & \sum_i \left(\frac{b_{\Cdot}}{\alpha} \frac{x_i}{\gamma_i} - \frac{b_i}{\gamma_i} \right) K(\gamma, x + \delta_i, \alpha+1) .
\end{align*}
\end{proof}

\section*{Acknowledgments}
This research was partially supported
by Grant-in-Aid for Scientific Research (23300104, 23650144)
and by the Aihara Project, the FIRST program from JSPS, initiated by CSTP.

\bibliographystyle{sjs}

\begin{thebibliography}{10}
\expandafter\ifx\csname natexlab\endcsname\relax\def\natexlab#1{#1}\fi

\bibitem[{Dawid(1983)}]{dawid1983invariant}
Dawid, A.~P. (1983).
\newblock Invariant prior distributions.
\newblock In Kotz, S., Johnson, N.~L., Read, C.~B. (Eds.), \emph{Encyclopedia of Statistical Sciences, vol.~4.}
Wiley-Interscience, New York, 228--236.

\bibitem[{George \emph{et~al.}(2006)George, Liang \&
  Xu}]{George:TheAnnalsOfStatistics:2006}
George, E.~I., Liang, F. \& Xu, X. (2006).
\newblock Improved minimax predictive densities under Kullback--Leibler loss.
\newblock \emph{Annals of Statistics} \textbf{34}, 78--91.

\bibitem[{George \emph{et~al.}(2012)George, Liang \&
  Xu}]{George:StatisticalScience:2012}
George, E.~I., Liang, F. \& Xu, X. (2012).
\newblock From minimax shrinkage estimation to minimax shrinkage prediction.
\newblock \emph{Statistical Science} \textbf{27}, 82--94.

\bibitem[{George \& Xu(2008)}]{george2008predictive}
George, E.~I. \& Xu, X. (2008).
\newblock Predictive density estimation for multiple regression.
\newblock \emph{Econometric Theory} \textbf{24}, 528--544.

\bibitem[{Ghosh \& Yang(1988)}]{GY1988AS}
Ghosh, M. \& Yang, M.-C. (1988).
\newblock Simultaneous estimation of Poisson means under entropy loss.
\newblock \emph{Annals of Statistics} \textbf{16}, 278--291.

\bibitem[{Kobayashi \&
  Komaki(2008)}]{Kobayashi:JournalOfMultivariateAnalysis:2008}
Kobayashi, K. \& Komaki, F. (2008).
\newblock Bayesian shrinkage prediction for the regression problem.
\newblock \emph{Journal of Multivariate Analysis} \textbf{99}, 1888--1905.

\bibitem[{Komaki(2001)}]{komaki2001shrinkage}
Komaki, F. (2001).
\newblock A shrinkage predictive distribution for multivariate normal
  observables.
\newblock \emph{Biometrika} \textbf{88}, 859--864.

\bibitem[{Komaki(2004)}]{Komaki:TheAnnalsOfStatistics:2004}
Komaki, F. (2004).
\newblock Simultaneous prediction of independent Poisson observables.
\newblock \emph{Annals of Statistics} \textbf{32}, 1744--1769.

\bibitem[{Komaki(2006)}]{Komaki:JournalOfMultivariateAnalysis:2006}
Komaki, F. (2006).
\newblock A class of proper priors for Bayesian simultaneous prediction of
  independent Poisson observables.
\newblock \emph{Journal of Multivariate Analysis} \textbf{97}, 1815--1828.

\bibitem[{Komaki(2013)}]{KBA2014}
Komaki, F. (2013).
\newblock Asymptotic properties of Bayesian predictive densities when the
  distributions of data and target variables are different, submitted.

\end{thebibliography}

\end{document}